\theoremstyle{definition}
\newtheorem{defi}{Definition}[section]
\newtheorem{rmk}[defi]{Remark}
\newtheorem{pt}{}[section]
\newtheorem{lem}[defi]{Lemma}
\newtheorem{thm}[defi]{Theorem}
\newtheorem{ptlem}{Lemma}[pt]
\newtheorem{thmm}{Theorem}
\newtheorem{coro}[thmm]{Corollary}
\newtheorem*{quest}{\underline{Question}}
\newtheorem*{set}{\underline{Setting}}
\theoremstyle{theorem}
\numberwithin{equation}{section}
\theoremstyle{rmk}
\newcommand{\xyz}{\xymatrix}
\newcommand{\ra}{\rangle}
\title{On the extension of holomorphic sections from reduced unions of strata of divisors}
\author{Chen-Yu Chi}
\thanks{
This work was partially supported by the Ministry of Science and Technology project (grant No.\ 107-2115-M-002-013 from August 2018) and by the European Research Council project “Algebraic and K\"ahler Geometry” (ERC-ALKAGE, grant No.\ 670846 from September 2015).  
}
\address{Chen-Yu Chi: Department of Mathematics,
National Taiwan University, Taipei 10617, Taiwan}
\email{chi@math.ntu.edu.tw}
\begin{document}
\fontsize{13pt}{20pt}\selectfont
\maketitle

\setlength{\baselineskip}{20pt}

\begin{abstract} We study the problem of extension of holomorphic sections of adjoint bundles from reduced unions of strata of divisors. The main technical result is a theorem of the Ohsawa--Takegoshi type, from which incorporated with the strong openness theorem we derive a qualitative extension theorem for adjoint bundles.
\end{abstract}
%Mathematics Subject Classification: 14F18, 32J25
\section{{\bf Introduction}}
In the current work we study the possibility of extending holomorphic sections of an adjoint bundle on a complex manifold simultaneously from the reduced union of several strata of an effective divisor. Before describing the question to study and stating the main results we first fix terminology and notation which will be in force throughout the whole paper. In the following 
\begin{itemize}
\item[-] $X$ will be a connected complex manifold admitting a projective morphism to a Stein space, and
\item[-] $\mathcal S:(S_j,\sigma_j,h_j)\, (j=1,\dots,q)$ a family of data which consist of a holomorphic line bundle $S_j$ on $X$, a nonzero holomorphic section $\sigma_j$ of $S_j$, and a smooth hermitian metric $h_j$ on $S_j$ fro every $j$.
\end{itemize}
The Cartier divisor $\mathrm{div}(\sigma_j)$ will also be denoted by $S_j$. Note that we allow data with different indices to coincide. 
\begin{defi}
(1) For any $J\subseteq\{1,\dots,q\}$ we let $S_J=\bigcap_{j\in J}S_j$ (as an analytic subspace of $X$) and 
$
\big(S^J,\sigma^J,h^J\big)=\bigotimes\nolimits_{j\notin J}(S_j,\sigma_j,h_j)
$. We denote $(S^\emptyset,\sigma^\emptyset,h^\emptyset)$ by $(S,\sigma_S,h_S)$. For any $\alpha_\bullet=(\alpha_1,\dots,\alpha_q)\in\mathbf R^q$, we let
$$
|\sigma_S|_{h_S}^{2\alpha_\bullet}=\prod\limits_{j=1}^q|\sigma_j|_{h_j}^{2\alpha_j}\ \text{ and }\ |\sigma^J|_{h^J}^{2\alpha_\bullet}=\prod\limits_{j\notin J}|\sigma_j|_{h_j}^{2\alpha_j}\quad (J\subseteq \{1,\dots,q\}).
$$
(2) {\bf (Transversal strata)} A nonempty analytic set $W$ in $X$ is called an ($\mathcal S$){\it -transversal stratum} associated to some $J\subseteq\{1,\dots,q\}$ if $W$ is an irreducible component of $S_J$ such that there exists a nowhere dense analytic subset $W_1$ of $W$ satisfying the following conditions:  along $W\setminus W_1$ the divisors $S_j\, (j\in J)$ are smooth and intersect transversally, and $(W\setminus W_1)\cap \bigcup_{j\notin J}S_j=\emptyset$. If this is the case, we denote $J$ by $J_W$.\footnote{It is clear that if $j\in J_W$ for some admissible $W$ then $S_j\neq S_{j'}$ (as divisors) for every $j'\neq j$.}  
\\
(3) {\bf (Adjunction maps)} For an transversal stratum $W$ associated to $J$, we let $S^W$ denote the line bundle $S^J|_{W_{\mathrm{reg}}}$. For any holomorphic vector bundle $F$, we have the adjunction map
$$
\xyz{
\Gamma(X,K_X\otimes S\otimes F)
\ar[r]^-{(\cdot)_W}&
\Gamma(W_{\mathrm{reg}},K_{W_{\mathrm{reg}}}\otimes S^W\otimes F|_{W_{\mathrm{reg}}}),}
$$   
which depends at most on the choice of the defining sections $\sigma_j$ and on the order in which $S_j$ are listed.\\
(4) {\bf (Admissible families)} A family $\mathcal W$ of transversal strata is called ($\mathcal S$){\it-admissible} if there is no inclusion relation between the members of $\mathcal W$. For such a family we let
$$ 
J(\mathcal W):=\bigcup_{W\in\mathcal W}J_W
\ \text{ and }\
\mathrm{J}_0(\mathcal W):=\{J_W\,|\, W\in\mathcal W\}.
$$
(5) {\bf (Underlying spaces of admissible families)} For a family $\mathcal W$ we let 
$$
\underline{\mathcal W}:=\big(\bigcup\nolimits_{W\in\mathcal W} W\big)_{\mathrm{red}}.
$$
(6) {\bf (SNC families)} We say that an admissible family $\mathcal W$ is ($\mathcal S$-)snc if $S_1+\cdots+S_q$ has simple normal crossing in a neighborhood of $W$ for every $W\in\mathcal W$. (Note that all admissible families are snc when the divisor $S$ is reduced and of simple normal crossing.) 
\\
(7) {\bf (Derived families)} For an $\mathcal S$-snc family $\mathcal W$ we let $\mathcal W'$ be the family consisting of maximal members (with respect to inclusion) of 
$$
\left\{
W'\,
\left|
\begin{array}{c}
W'\text{ is a stratum of the snc divisor}\\
W\cap S^{J_W}\text{ on }W\text{ for some }W\in\mathcal W
\end{array}
\right.\right\}.
$$
$\mathcal W'$ is also an $\mathcal S$-snc family. More generally, we let $\mathcal W^{(k)}=(\mathcal W^{(k-1)})'$ for all $k\in\mathbf N$ ($\mathcal W^{(0)}:=\mathcal W$). 
\end{defi}

\begin{defi}\label{dominate a small multiple}
Given two real $(1,1)$-currents $T_1$ and $T_2$ on $X$, we say that $T_1$ locally dominates sufficiently small multiples of $T_2$ (Notation: $T_1\succcurlyeq \pm T_2$) if for every point $p\in X$ there exist a neighborhood $V$ of $p$ and a number $c_0>0$ such that $(T_1-c T_2)|_V$ is a positive $(1,1)$-current for every $c\in (-c_0,c_0)$. There is obviously a natural extension of this definition to endomorphism-valued $(1,1)$-currents in the sense of Nakano.
\end{defi} 
In this paper we prove the following extension theorem.
\begin{thmm}\label{from snc} 
Suppose that $\mathcal W$ is an $\mathcal S$-snc family and $(F,h_F)$ is a holomorphic vector bundle with a hermitian metric such that
$
\sqrt{-1}\, \Theta_{h_F}\succcurlyeq\pm\sqrt{-1}\, \Theta_{h_j}\otimes\mathrm{id}_F
$
%$\sqrt{-1}\, \Theta_{h_F}$ locally dominates sufficiently small multiples of $\sqrt{-1}\, \Theta_{h_j}\otimes\mathrm{id}_F$
for every $j\in \bigcup_{k=0}^{\dim X}J(\mathcal W^{(k)})$.\\
{\bf (I)} If $h_F$ is smooth, then the restriction map
\begin{align*}
\xyz{\Gamma(X,K_X\otimes S\otimes F)\ar[r]&
\Gamma\big(\underline{\mathcal W},(K_X\otimes S\otimes F)|_{\underline{\mathcal W}}\big)}
\end{align*}
is surjective.\\
{\bf (II)} If $(F,h_F)$ is a holomorphic line bundle with a singular hermitian metric, then the restriction map
\begin{align*}
\xyz{\Gamma\big(X,\mathcal I(h_F)(K_X\otimes S\otimes F)\big)\ar[r]&
\Gamma\big(\underline{\mathcal W},(K_X\otimes S\otimes F)|_{\underline{\mathcal W}}\big)}
\end{align*}
is surjective ($\mathcal I(h_F)$ being the multiplier ideal sheaf associated to $h_F$). 
%Furthermore, when $\mathcal W$ consists of {\it minimal} strata,\footnote{If $\mathcal W$ consists of minimal strata, $\mathcal W'=\emptyset$ and $\mathcal I\big(|\sigma^{J(\mathcal W)}|^{-2}h_F|_W\big)=\mathcal I(h_F|_W)$ for every $W\in\mathcal W$.} it suffices to assume (\ref{domi}) only for $j\in J(\mathcal W)$ to have the surjectivity of (\ref{to W}).
\end{thmm}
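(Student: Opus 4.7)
My plan is to reduce (I) to (II) by observing that $\mathcal I(h_F)=\mathcal O_X$ for smooth $h_F$, and then to prove (II) by induction on the pair
$$
\bigl(d(\mathcal W),\;|\mathcal W|\bigr),\qquad d(\mathcal W):=\max\{k\in\mathbf N\mid\mathcal W^{(k)}\neq\emptyset\},
$$
ordered lexicographically, treating a single-stratum Ohsawa--Takegoshi extension---the main technical result alluded to in the abstract---as the workhorse at each step.

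The base case would be $\mathcal W=\{W\}$ with $\mathcal W'=\emptyset$: here $W$ meets none of the divisors $S_j$ with $j\notin J_W$, and extending an adjunction datum on $W$ amounts to a twisted Ohsawa--Takegoshi theorem for the line bundle $S\otimes F$ with weight $\prod_{j\in J_W}|\sigma_j|^{2(1-\epsilon)}_{h_j}$. The curvature hypothesis $\sqrt{-1}\,\Theta_{h_F}\succcurlyeq\pm\sqrt{-1}\,\Theta_{h_j}\otimes\mathrm{id}_F$ for $j\in J_W\subseteq J(\mathcal W)$ is exactly what lets me absorb the weight perturbation into $h_F$ so that the Bochner--Kodaira--Nakano estimate underlying O--T goes through.

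For the inductive step I would pick $W_0\in\mathcal W$ of maximal dimension, apply the single-stratum O--T to $s|_{W_0}$ to produce a global $\tilde s_0\in\Gamma(X,\mathcal I(h_F)(K_X\otimes S\otimes F))$ extending it, and set $r:=s-\tilde s_0|_{\underline{\mathcal W}}$. Then $r|_{W_0}=0$, and on every other $W\in\mathcal W$ the restriction $r|_W$ vanishes along $W\cap W_0$. The combinatorial content of the definition of $\mathcal W'$ is that each irreducible component of $W\cap S^{J_W}$ lies in some member of $\mathcal W^{(1)}$; in particular the components of $W\cap W_0$ are contained in strata of $\mathcal W'$. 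Hence $r|_W$ factors through the appropriate defining sections $\sigma_j$ ($j\in J_{W_0}\setminus J_W$), which should allow me to rewrite the extension problem for $r$ as one of the same form but for the strictly smaller admissible family $\mathcal W\setminus\{W_0\}$ with twisting data recorded by $\mathcal W'$. The hypothesis on $\bigcup_{k=0}^{\dim X}J(\mathcal W^{(k)})$ is stated precisely so that the required curvature inequality remains available for every new weight that appears at every level of the recursion; after finitely many steps the process terminates, and summing the constructed extensions yields the desired global section.

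For (II) with a general singular metric, I would first run the whole argument with $h_F^{1+\epsilon}$ in place of $h_F$ so that each invocation of O--T is a standard $L^2$ statement, and then pass to the limit $\epsilon\to 0^+$ using the strong openness theorem of Guan--Zhou to recover $\mathcal I(h_F)$-control. The main obstacle I anticipate is the bookkeeping in the inductive step: I need to verify that $r$ can be written canonically as a section of an appropriately twisted adjoint bundle on the reduced family so that the induction hypothesis genuinely applies, and simultaneously that the curvature condition required at that new level is subsumed by the hypothesis imposed on the original $\mathcal W$. The statement of the theorem is tailored---via the derived families and the uniform quantification over $\bigcup_{k=0}^{\dim X}J(\mathcal W^{(k)})$---to make both of these work.
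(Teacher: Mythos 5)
Your plan has the right instinct (successive extension-and-correction organized by the derived families), but it differs substantially from the paper's argument, and two gaps keep it from going through as written. First, "(I) reduces to (II)" is backwards: (II) concerns \emph{line} bundles with singular metrics, while (I) concerns general vector bundles; observing $\mathcal I(h_F)=\mathcal O_X$ for smooth $h_F$ does not let you deduce the vector-bundle statement from the line-bundle one. The paper runs the same overall argument for both parts, invoking the two corresponding parts of its technical extension theorem. Second, and more seriously, your stratum-by-stratum induction has an unfilled hole. After extending $u|_{W_0}$ to $\tilde s_0$ and forming $r=s-\tilde s_0|_{\underline{\mathcal W}}$, you need the subsequent extension of $r$ from $\mathcal W\setminus\{W_0\}$ to still vanish along $W_0$, or else $\tilde s_0$ plus that extension no longer restricts correctly on $W_0$; nothing in your proposal guarantees this. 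Factoring out defining sections to twist the bundle does not salvage the step, because the index set $J_{W_0}\setminus J_W$ changes with $W$, so there is no single consistent twist, and the residual problem is genuinely not "of the same form."

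The paper sidesteps both issues by working level by level rather than stratum by stratum. The key step is Lemma \ref{van ext}: any $u$ vanishing on the full derived underlying space $\underline{\mathcal W}'$ extends globally from $\underline{\mathcal W}$. That vanishing hypothesis is used precisely to verify the integrability condition of Theorem \ref{ext snc}, and this is also exactly where strong openness enters---to gain a small $\delta>0$ of room in the exponents---rather than as a limiting argument in $h_F^{1+\epsilon}$. The theorem then follows by iterating from the bottom: extend $u_m$ from the deepest level $\underline{\mathcal W}^{(m)}$, subtract, observe that the residual on $\underline{\mathcal W}^{(m-1)}$ automatically vanishes on $\underline{\mathcal W}^{(m)}$ by construction, apply the lemma again, and so on up to the top. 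In that scheme the vanishing hypothesis is satisfied at every stage for free, and the "bookkeeping obstacle" you identify never arises.
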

The following is an immediate consequence of Theorem \ref{from snc}.
\begin{coro}
Suppose that $F$ is a holomorphic vector bundle on $X$ such that $\mathcal O_{\mathbf P(F)}(1)$ admits a smooth hermitian metric with strictly positive curvature.\footnote{For example, $X$ is projective and $F$ is an ample vector bundle in the sense of Hartshorne.} For an snc family $\mathcal W$, the restriction map
\begin{align*}
\xyz{\Gamma\big(X,K_X\otimes S\otimes \det F\otimes \mathrm{Sym}^k F\big)\ar[d]\\
\Gamma\big(\underline{\mathcal W},K_X\otimes S\otimes \det F\otimes \mathrm{Sym}^k F|_{\underline{\mathcal W}}\big)}
\end{align*}
is surjective. 
\end{coro}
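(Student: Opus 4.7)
The plan is to lift the assertion to the projectivization $\pi\colon Y=\mathbf P(F)\to X$ and apply Theorem~\ref{from snc}(I) there to the line bundle $L^{r+k}$, where $L=\mathcal O_{\mathbf P(F)}(1)$ carries by hypothesis a smooth Hermitian metric $h_L$ with strictly positive curvature and $r=\mathrm{rank}\,F$. The composition $Y\to X\to\{\text{Stein}\}$ is projective, so $Y$ satisfies the ambient hypothesis of the theorem.

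The first step is the standard projective-bundle identification. From $K_{Y/X}=\pi^*\det F\otimes L^{-r}$, together with the projection formula and the vanishing $R^i\pi_*L^k=0$ for $i\ge 1$ and $k\ge 0$, one obtains
\[
\pi_*\bigl(K_Y\otimes\pi^*S\otimes L^{r+k}\bigr)=K_X\otimes S\otimes\det F\otimes\mathrm{Sym}^k F,
\]
which identifies global sections of the former on $Y$ with global sections of the latter on $X$. The same computation applies after restriction to the preimage $\pi^{-1}(\underline{\mathcal W})$, which coincides with $\underline{\pi^{-1}\mathcal W}$ for the lifted family $\pi^{-1}\mathcal W=\{\pi^{-1}(W)\}_{W\in\mathcal W}$, because $\pi$ is smooth and the relevant higher direct images still vanish along the fibres.

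Next I would verify the hypotheses of Theorem~\ref{from snc}(I) for $Y$ with the pulled-back data $(\pi^*S_j,\pi^*\sigma_j,\pi^*h_j)$, the family $\pi^{-1}\mathcal W$, and the Hermitian line bundle $(L^{r+k},h_L^{r+k})$. Smoothness of $\pi$ preserves smoothness and transversal intersection of the $S_j$, so $\pi^{-1}\mathcal W$ is $(\pi^*\mathcal S)$-snc. The curvature condition is automatic: $\sqrt{-1}\,\Theta_{h_L^{r+k}}=(r+k)\sqrt{-1}\,\Theta_{h_L}$ is smooth and pointwise positive definite, and hence locally dominates every smooth $(1,1)$-form, in particular $\pm\sqrt{-1}\,\Theta_{\pi^*h_j}$ for every $j$. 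Theorem~\ref{from snc}(I) then lifts any section of $(K_Y\otimes\pi^*S\otimes L^{r+k})|_{\pi^{-1}(\underline{\mathcal W})}$ to a global section on $Y$, which by the first step is the desired extension on $X$.

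The only technical point that requires attention is the compatibility of the two restriction maps under $\pi_*$. I would settle this by pulling back the ideal-sheaf sequence $0\to\mathcal I_{\underline{\mathcal W}}\to\mathcal O_X\to\mathcal O_{\underline{\mathcal W}}\to 0$ to $Y$ (flatness of $\pi$ makes the pullback exact and identifies $\pi^*\mathcal O_{\underline{\mathcal W}}$ with $\mathcal O_{\pi^{-1}(\underline{\mathcal W})}$), tensoring with $K_Y\otimes\pi^*S\otimes L^{r+k}$, and applying $\pi_*$; the vanishing of $R^1\pi_*$ on both summands converts the surjection furnished on $Y$ by Theorem~\ref{from snc}(I) into the required surjection on $X$.
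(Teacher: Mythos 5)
Your projectivization argument is correct.  The relative canonical formula $K_{Y/X}=\pi^*\det F\otimes L^{-r}$ and the vanishing $R^{i}\pi_*L^{r+k}=0$ for $i>0$ do identify $\Gamma(Y,K_Y\otimes\pi^*S\otimes L^{r+k})$ with $\Gamma(X,K_X\otimes S\otimes\det F\otimes\mathrm{Sym}^kF)$, the same identification holds after restriction to $\pi^{-1}(\underline{\mathcal W})$ by flatness and the projection formula in the derived category, and since the smooth curvature form of $h_L^{r+k}$ is everywhere positive definite it locally dominates small multiples of every smooth $(1,1)$-form, so the condition $\sqrt{-1}\,\Theta_{h_L^{r+k}}\succcurlyeq\pm\sqrt{-1}\,\Theta_{\pi^*h_j}$ holds for all $j$; the pulled-back family $\pi^{-1}\mathcal W$ is indeed $\pi^*\mathcal S$-snc because $\pi$ is a submersion with connected fibres, and $Y$ is projective over the same Stein base, so Theorem~\ref{from snc}(I) applies on $Y$ and descends as you describe.

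The paper offers no written proof beyond calling the corollary an immediate consequence of Theorem~\ref{from snc}, but the choice of hypothesis (positivity of $\mathcal O_{\mathbf P(F)}(1)$) and the fact that part~(I) of the theorem was deliberately stated for vector bundles both point to a shorter intended route: by Berndtsson's curvature theorem for direct images (or the Demailly--Skoda theorem when $k=1$), the positivity of $\mathcal O_{\mathbf P(F)}(1)$ produces a smooth hermitian metric on $\det F\otimes\mathrm{Sym}^kF$ itself that is strictly Nakano positive, which makes the domination hypothesis of Theorem~\ref{from snc}(I) hold on $X$ directly, with no lift to $\mathbf P(F)$ and no bookkeeping with $\pi_*$.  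Your version is a genuinely different proof: it avoids citing Berndtsson at the price of the projection-formula verifications, and it only uses the line-bundle case of Theorem~\ref{from snc}(I).  Both are valid; the Berndtsson route is the one that makes the word \emph{immediate} accurate, while yours is more self-contained relative to what the paper actually proves.
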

Note that in Theorem \ref{from snc} we assume $\mathcal W$ to be $\mathcal S$-snc rather than assume the divisor $S=S_1+\cdots+S_q$ to be snc. In the special case that $S=S_1+\cdots+S_q$ itself is snc and $\underline{\mathcal W}$ is the (scheme-theoretic) complete intersection $S_1\cap\cdots\cap S_k$ for some $k\leqslant q$, if we further assume $F$ to be an ample line bundle, the surjectivity in {\bf(I)} can be obtained as follows: we consider the Koszul resolution
$$
\xyz{0\ar[r]& M_N\ar[r]&\cdots\ar[r]&M_1\ar[r]&\mathcal O_X\ar[r]&\mathcal O_{\underline{\mathcal W}}\ar[r]&0}
$$
where
$$
M_l = \oplus_{J\subseteq\{1,\dots,k\},\, |J|=l}\mathcal O_X(-S_J). 
$$
Applying the Kawamata--Viehweg vanishing theorem, we have for $l>0$ that
$$
H^l\big(X, M_l\otimes \mathcal O_X(K_X\otimes S\otimes F)\big)\simeq  \oplus_{J\subseteq\{1,\dots,k\},\, |J|=l} H^l\big(X, \mathcal O_X(K_X\otimes S^J\otimes F)\big) = 0, 
$$
and simple diagram chasing yields the desired surjectivity. However, the above argument cannot be applied for a general $\mathcal S$-snc family $\mathcal W$ as in Theorem \ref{from snc}, even if $F$ is assumed ample. On the other hand, Cao, Demailly, and Matsumura \cite{cdm} have obtained a very general qualitative extension theorem for an adjoint line bundle $K_X\otimes E$ equipped with a singular metric $h$ together with some quasi-psh function $\psi$ with analytic singularities satisfying certain curvature conditions. However, the singular metric case of Theorem \ref{from snc} is not an immediate consequence of their result, in the sense that \cite{cdm} extends sections from the analytic subspace $V\big(\mathcal I(he^{-\psi})\big)$, which does not seem to have an explicit description when $h$ is a less explicit singular metric, even if $V\big(\mathcal I(\psi)\big)$ defines a simple normal crossing divisor; in our case the center for extension is rather explicit, namely, the underlying space associated to an admissible family. The proof of Theorem \ref{from snc} will be given in Section \ref{app} based on a strategy of successive extension-correction suggested by Demailly, the idea of which appeared already in \cite{dema16}, together with a qualitative extension theorem (Theorem \ref{ext snc}). Validity of the strong openness conjecture \cite{gz2} (especially the slightly stronger forms in \cite{hiep} and \cite{lempert}) is used in the proof of the singular metric case of Theorem \ref{from snc} (cf.\ Lemma \ref{van ext}), and this is the place the triviality of multiplier sheaves plays a role. In view of Theorem \ref{from snc}{\bf (II)}, it seems natural to ask the following
\begin{quest}
Suppose that $\mathcal W$ is an $\mathcal S$-snc family and $(F,h_F)$ a holomorphic line bundle on $X$ with a singular hermitian metric such that 
$
\sqrt{-1}\, \Theta_{h_F}\succcurlyeq\pm\sqrt{-1}\, \Theta_{h_j}$ for every $j\in \bigcup\nolimits_{k=0}^{\dim X}J(\mathcal W^{(k)})
$. 
Assume that $h_F|_{W\cap S_J}$ is well-defined for every $W$ and for every $J$. For every $u\in\Gamma\big(S,\mathcal O_S(K_X\otimes S\otimes F)\big)$ such that 
$
u_W|_{W\cap S_J}\in \Gamma\big(W,\mathcal I(h_F|_W)(K_X\otimes S\otimes F)\big)
$
for every $W\in\bigcup\nolimits_{k=0}^{\dim X}\mathcal W^{(k)}$ and every $J\subseteq\{1,\dots,q\}$, does $u$ admit an extension $U$ in $\in\Gamma\big(X,\mathcal I(h_F)(K_X\otimes S\otimes F)\big)$?
\end{quest}
Note that if we drop the well-definedness requirement, i.\!\ e., allow $h_F|_{W\cap S_J}=+\infty$ and define $\mathcal I(h_F|_{W\cap S_J})$ to be $0$ if this is the case, then a negative answer has been given by a concrete example in \cite{ot05}.

Now we introduce the main technical ingredient (Theorem \ref{ext snc}), which is an extension theorem of the Ohsawa--Takegoshi type.  
\begin{defi}\label{GA}\footnote{$\mathcal G_A$ here essentially consists of all functions $G$ with  $G''(\cdot)=c_A(\cdot)e^{-(\cdot)}$ fulfilling the conditions in Theorem 2.1 of \cite{gz}.} For $A\in (-\infty,\infty]$, we define $\mathcal G_A$ to be the set of functions $G\in C^\infty((-A,\infty))$ such that 
$$
\quad\quad G^{(j)}(-A^+):=\lim\limits_{y\rightarrow -A^+}G^{(j)}(y)\ (j=0,1,2)\text{ all exist and are strictly positive,}
$$
$$
\quad\quad G''>0,\ G'^2-GG''>0,\ \text{and }
$$
\begin{align}\label{G limit infty}
G'(\infty):=\lim\limits_{y\rightarrow \infty}G'(y)\ \text{ exists and is strictly positive.}
\end{align}
\end{defi}
It is clear that $G>0$ and $G'>0$ for every $G\in\mathcal G_A$.

\begin{thmm}\label{ext snc} Let $\big((F,h_F),\mathcal W, \delta, \delta_\bullet, \gamma_\bullet, A, G\big)$ 
consist of
\begin{itemize}
\item[(i)] a holomorphic vector bundle $F$ on $X$ with a 
measurable metric $h_F$,  
\item[(ii)] an $\mathcal S$-admissible family $\mathcal W$,
\item[(iii)] numbers $\gamma_j\geqslant 0$, $\delta
> 0$, $0\leqslant\delta_j<1$, and $\varepsilon>0$ such that 
$$
\gamma_j > 0\ \text{ and }\ \delta_j=\delta\ \text{ for every }j\in J(\mathcal W),
$$ 
\item[(iv)] $A\in (-\infty, \infty]$ such that $|\sigma_S|_{h_S}^{2\gamma_\bullet} < e^A$,
\item[(v)] $G\in \mathcal G_A$. 
\end{itemize}
{\bf (I)} Assume that $h_F$ is smooth. If on $X\setminus \bigcup_{j=1}^q S_j$ we have
\begin{align}\label{cur snc 1}
\widetilde\Theta
:=
\sqrt{-1}
\Bigg(\Theta_{h_F}+\bigg(\sum_{j=1}^q\delta_j\Theta_{h_j}
+
\delta\sum\limits_{W\in\mathcal W}
|J_W|\,
\partial\overline\partial\log\sum\limits_{j\in J_W}|\sigma_j|_{h_j}^2\bigg)\otimes\mathrm{id}_F\Bigg)\geqslant 0
\end{align}
and
\begin{align}\label{cur snc 2}
\sqrt{-1}\left(\widetilde\Theta-
\frac{G'\big(-\log|\sigma_S|_{h_S}^{2\gamma_\bullet}\big)}{G\big(-\log|\sigma_S|_{h_S}^{2\gamma_\bullet}\big)}\sum\limits_{j=1}^q \gamma_j\Theta_{h_j}\otimes\mathrm{id}_F\right)\geqslant 0
\end{align}
in the sense of Nakano, and if
\begin{align}\label{bdd below}
G''\big(-\log|\sigma_S|_{h_S}^{2\gamma_\bullet}\big)\,
\frac{|\sigma_S|_{h_S}^{2\delta_\bullet-2}}{\prod\limits_{J\in\mathrm{J}_0(\mathcal W)} \Big(\sum\limits_{j\in  J} |\sigma_j|_{h_j}^2\Big)^{\delta |J|}}\,\ \text{is locally bounded from below,}
\end{align}
then for any collection of sections 
$$
u_W\in \Gamma\big(W_{\mathrm{reg}}, K_{W_{\mathrm{reg}}}\otimes S^W\otimes F|_W)\big)\quad (W\in\mathcal W)
$$
such that  
\begin{align}\label{snc int finite}
\int_{W_{\mathrm{reg}}} 
\frac{|\sigma^{J_W}|_{h^{J_W}}^{2\delta_\bullet}}{
\prod\limits_{J'\in\mathrm{J}_0(\mathcal W)\setminus\{J_W\}}\,\Big(\sum \limits_{j\in J'}|\sigma_j|^2_{h_j}\Big)^{\delta
\,|J'|}}\left\langle \frac{u_W}{\sigma^{J_W}}\right\ra^2_{h_F}
<\infty\quad (W\in\mathcal W),
\end{align}
there exists $U\in \Gamma(X,K_X\otimes S\otimes F)$ such that $U_W=u_W$ for every $W\in\mathcal W$ and
\begin{align}\label{snc L2 est}
\begin{split}
&\int_X G''\big(-\log|s_S|_{h_S}^{2\gamma_\bullet}\big)\,
\frac{|s_S|_{h_S}^{2\delta_\bullet}}{\prod\limits_{J\in\mathrm{J}_0(\mathcal W)} \Big(\sum\limits_{j\in  J} |s_j|_{h_j}^2\Big)^{\delta |J|}} 
\left \langle \frac{U}{s_S}\right\ra^2_{h_F} 
\\
&\quad\leqslant 
G'(\infty)
\sum\limits_{W\in\mathcal W}\frac{B_{|J_W|-1}(\delta)}{\sum\limits_{j\in J_W}\gamma_j}
\int_{W_{\mathrm{reg}}} 
\frac{|\sigma^{J_W}|_{h^{J_W}}^{2\delta_\bullet}}{
\prod\limits_{J'\in\mathrm{J}_0(\mathcal W)\setminus\{J_W\}}\,\Big(\sum \limits_{j\in J'}|s_j|^2_{h_j}\Big)^{\delta
\,|J'|}}\left\langle \frac{u_W}{\sigma^{J_W}}\right\ra^2_{h_F}
\end{split}
\end{align}
with
$$
B_{k-1}(\delta)=\pi^k\int_{\Delta_{k-1}}
\frac{d\sigma_1\cdots d\sigma_{k-1}}{\big(\sigma_1\cdots\sigma_{k-1}(1-\sigma_1-\cdots-\sigma_{k-1})\big)^{(1-\delta)}}\quad(B_0(\delta):=1)
$$ 
and
$$
\Delta_{k-1}=\big\{(\sigma_1,\dots,\sigma_{k-1})\in [0,\infty)^k\,\big|\, \sigma_1+\cdots+\sigma_{k-1}\leqslant 1\big\}.
$$
{\bf (II)} The statement in {\bf(I)} still holds if we assume instead that $(F,h_F)$ is a line bundle with a measurable hermitian metric of $L^1_{\mathrm{loc}}$ weight, and replace (\ref{cur snc 1}) and (\ref{cur snc 2}) by the following family of conditions:
\begin{align}\label{cur snc'}
\begin{split}
&\Theta_{h_F}
-
\sum_{j\in J(\mathcal W)}
\bigg(
\begin{array}{c}
\delta(t_j-1)\\
+\frac{G'(-A^+)}{G(-A^+)}s\gamma_j
\end{array}
\bigg)
\,\Theta_{h_j}
-
\sum_{j\notin J(\mathcal W)}
\bigg(
\begin{array}{c}
\frac{G'(-A^+)}{G(-A^+)}s\gamma_j\\
-\delta_j
\end{array}
\bigg)
\,\Theta_{h_j}
\geqslant 0
\\
&\text{ for all }\ s\in [0,1]\ \text{ and }\ (t_j)\in \prod\limits_{j\in J(\mathcal W)} [0, T_j]\quad \Big( T_j:=\sum\limits_{W\,:\,j\in J_W}|J_W|\Big).
\end{split}
\end{align}
\end{thmm}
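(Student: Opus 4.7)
The plan is to follow Guan--Zhou's optimal $L^2$ extension strategy, adapted to the multi-stratum admissible setting through a cutoff/twisted Bochner--Kodaira--Nakano argument that simultaneously handles all members of $\mathcal{W}$. For Part (I), first I would use the transversality of the divisors $S_j$, $j\in J_W$, along $W\setminus W_1$ to produce a smooth local extension $\tilde U_W$ of each $u_W$ in a neighborhood of $W_{\mathrm{reg}}$ via the Poincar\'e residue formula, then patch these via a partition of unity into a smooth $\tilde U$ defined on a neighborhood of $\underline{\mathcal{W}}$ in $X$. Multiplying by a cutoff $\chi_\varepsilon\bigl(-\log|\sigma_S|_{h_S}^{2\gamma_\bullet}\bigr)$ that concentrates near $\underline{\mathcal{W}}$ as $\varepsilon\to 0$ yields globally defined smooth sections whose $\overline\partial$ is supported in a thin shell around $\underline{\mathcal{W}}$. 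One then solves $\overline\partial v_\varepsilon = \overline\partial(\chi_\varepsilon\tilde U)$ by the twisted Bochner--Kodaira--Nakano inequality with twisting factor $\eta = G\bigl(-\log|\sigma_S|_{h_S}^{2\gamma_\bullet}\bigr)$ and compensating function $\lambda$ dictated by $G'^2-GG''>0$, in the manner of Guan--Zhou; hypotheses (\ref{cur snc 1}) and (\ref{cur snc 2}) are precisely what is needed to make the resulting curvature operator Nakano-semipositive, with the Bergman-type term $\delta\sum_{W\in\mathcal{W}}|J_W|\,\partial\overline\partial\log\sum_{j\in J_W}|\sigma_j|_{h_j}^2$ regularizing the singular product weight in directions normal to each stratum. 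Setting $U_\varepsilon = \chi_\varepsilon\tilde U - v_\varepsilon$ produces a global holomorphic section, and the local boundedness hypothesis (\ref{bdd below}) permits extraction of a weak limit $U$ with $U_W = u_W$; the sharp constant $B_{|J_W|-1}(\delta)/\sum_{j\in J_W}\gamma_j$ for each stratum $W$ of codimension $k=|J_W|$ emerges as a residue contribution on the sphere bundle of its normal bundle, with the substitution $\sigma_j = |z_j|^2/\sum_{l\in J_W}|z_l|^2$ transforming the normal-direction integral into the simplex integral defining $B_{k-1}(\delta)$.

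\medskip

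For Part (II), I would regularize $h_F$ by Demailly's equisingular approximation, using a decreasing sequence of smooth hermitian metrics $h_{F,\nu}$. The parameters $s\in[0,1]$ and $(t_j)\in\prod_{j\in J(\mathcal{W})}[0,T_j]$ in (\ref{cur snc'}) are designed to encode the range of convex-combination terms of $\Theta_{h_j}$ that enter the Bochner computation when applied to $h_{F,\nu}$: the factor $s\gamma_j G'(-A^+)/G(-A^+)$ comes from the $G$-weight evaluated in the limit along the stratum, the $\delta(t_j-1)$ terms from the stratum-regularizer whose coefficient varies as $W$ ranges over the strata containing $S_j$, and the $\delta_j$ from the $|\sigma_S|^{2\delta_\bullet}$ factor in (\ref{snc L2 est}); (\ref{cur snc'}) thus guarantees semipositivity uniformly in $\nu$. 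Applying Part (I) to each $h_{F,\nu}$ yields extensions with uniform $L^2$ bounds, and extracting a weakly convergent subsequence and invoking the strong openness theorem (cf.\ \cite{hiep}, \cite{lempert}) places the limit in $\Gamma(X,\mathcal{I}(h_F)(K_X\otimes S\otimes F))$.

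\medskip

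The principal obstacle will be the simultaneous extension from possibly intersecting strata without losing the sharp constant: one must arrange that the cross-terms between different members of $\mathcal{W}$ contribute only negligibly in the limit $\varepsilon\to 0$, while each $W\in\mathcal{W}$ picks up exactly its intrinsic residue $B_{|J_W|-1}(\delta)/\sum_{j\in J_W}\gamma_j$. This will require delicate tracking of the Bergman-type regularizer near the intersection loci of distinct strata and a careful choice of the cutoff $\chi_\varepsilon$; in Part (II), the further subtlety is that a single $S_j$ can appear in several $J_W$, which is why the parameter $T_j=\sum_{W:j\in J_W}|J_W|$, rather than a fixed constant, appears in (\ref{cur snc'}).
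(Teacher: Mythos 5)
Your overall plan correctly identifies the Guan--Zhou optimal $L^2$ framework, the twisted Bochner--Kodaira--Nakano computation with compensating function $\lambda$ governed by $G'^2-GG''>0$, the spherical/simplex coordinate change producing the constant $B_{k-1}(\delta)$, and the regularization step for Part (II). The broad architecture matches the paper. However, there are three substantive discrepancies.

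First, and most significantly, you use a single function $\Psi=-\log|\sigma_S|_{h_S}^{2\gamma_\bullet}$ both as the argument of the cutoff $\chi_\varepsilon$ and as the twisting weight $G(\Psi)$. The paper's metatheorem (Theorem~\ref{meta ext}) deliberately splits this into \emph{two distinct} functions $\phi$ (which enters the cutoff and $G$) and $\psi$ (which enters the $L^2$ metric $he^{-\psi}$), and the author explicitly identifies this decoupling as the main modification of the Guan--Zhou argument. In the application, $\phi=\log|\sigma_S|_{h_S}^{2\gamma_\bullet}$ but $\psi=\log\big(|\sigma_S|_{h_S}^{2(1-\delta_\bullet)}\prod_{J\in\mathrm J_0}(\sum_{j\in J}|\sigma_j|_{h_j}^2)^{\delta|J|}\big)$. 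With a single weight, the $L^2$ estimate you would get carries a density $G''(-\Psi)$ but not the additional factor $|\sigma_S|_{h_S}^{2\delta_\bullet}\prod_J(\cdots)^{-\delta|J|}$ appearing in (\ref{snc L2 est}); moreover the curvature hypotheses (\ref{cur snc 1}), (\ref{cur snc 2}) arise exactly from substituting the two different $\partial\overline\partial\psi$ and $\partial\overline\partial\phi$ into the two Nakano conditions $\sqrt{-1}(\Theta_h+\partial\overline\partial\psi)\geqslant 0$ and $\sqrt{-1}\big(\Theta_h+\partial\overline\partial\psi+\tfrac{G'(-\phi)}{G(-\phi)}\partial\overline\partial\phi\big)\geqslant 0$, so the single-weight version would not reproduce them. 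This is not merely a bookkeeping issue; your proposal does not explain how the stated measure on the left of (\ref{snc L2 est}) emerges.

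Second, in Part (II) you appeal to the strong openness theorem to land the limit in $\Gamma(X,\mathcal I(h_F)(K_X\otimes S\otimes F))$. But the conclusion of Theorem~\ref{ext snc}{\bf(II)} is the same as in {\bf(I)}: an extension in $\Gamma(X,K_X\otimes S\otimes F)$ with the $L^2$ bound (\ref{snc L2 est}); no multiplier ideal sheaf appears. The paper's proof of (II) reduces to (I) by regularization plus a normal-family argument, using that $\widetilde\Theta_{s,t_\bullet}$ is a fixed linear combination of $\Theta_{h_F}$ and $\Theta_{h_j}$ with constant coefficients, so (\ref{cur snc'}) survives regularization. Strong openness enters only later, in the proof of Theorem~\ref{from snc} (see Lemma~\ref{van ext}), not here. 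Your observation about the role of $s$ and $(t_j)$ as encoding the convex-combination range is, on the other hand, essentially correct.

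Third, a minor structural difference: you build the initial extension from local Poincar\'e residues patched by a partition of unity, whereas the paper first reduces to a Stein $X$ (by deleting a suitable hypersurface and invoking Riemann's extension theorem via (\ref{bdd below})) and then fixes a single global holomorphic $U_0\in\Gamma(X,K_X\otimes S\otimes F)$ restricting to $u_J$ on each $S_J$. Both routes would serve, but the paper's reduction to the Stein case is what also permits trivializing the bundles and assuming $S$ is reduced snc, which simplifies the coordinate computation in Lemma~\ref{int estimate}. Your spherical substitution $\sigma_i=|z_i|^2/\sum_{l\in J_W}|z_l|^2$ is essentially the one used, except that in the paper the coordinates are first rescaled by $e^{-\varphi_{j_i}/2}\widetilde A_p^{1/2}$ so that the Jacobian and the weights separate cleanly; this rescaling is needed to make the residue argument rigorous and to identify the constant $\pi^k/\sum_i\gamma_{j_i}$ on each stratum.
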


%We remark that the integrability condition (\ref{snc int finite}) implies that $u_W$ vanishes on $W\cap W'$ for distinct $W$ and $W'$ in $\mathcal W$. 
The proof of Theorem \ref{ext snc} occupies both Sections \ref{general scheme} and \ref{ext from strata}. Many proofs of theorems of Ohsawa--Takegoshi type can be separated into two parts, as we explain now. Roughly speaking, given a holomorphic section $u$ on a ``center'' $Z$, the first part of the argument modifies a rather arbitrary extension $U_0$, the $L^2$-norm of which one does not have estimate of, to obtain an extension $F_t$ with $L^2$-norm bounded by a quantity $I_t$ with a parameter $t$. $I_t$ usually appears as the $L^2$-norm of $U_0$ with respect to some ``density'' $\chi_t$ supported in a neighborhood $V_t$ of $Z$ on which $u$ is defined; when $t\rightarrow-\infty$, the ``mass'' associated to the $\chi_t$ concentrates to the center. The second part of the argument rests on showing that $\liminf\limits_{t\rightarrow-\infty} I_t$ is bounded by the $L^2$-norm of $u$ over $Z$ with respect to a suitable measure. In Section \ref{general scheme} we formalize the first part of argument into a metatheorem (Theorem \ref{meta ext}), the proof of which follows very closely that of Theorem 2.1 in \cite{gz}. A difference between our treatment and that of \cite{gz} is that we introduce not only one but two ``weight functions'' $\psi$ and $\phi$. They usually appear as the same function in previous work of other authors (e.\,g., $\Psi$ in \cite{gz}), but allowing them to be different makes the curvature conditions in our extension theorem more flexible. As for the second part of argument, an upper bound of $\liminf\limits_{t\rightarrow-\infty} I_t$ can be given by the $L^2$-norm of $u$ with respect to an abstractly defined measure due to Ohsawa \cite{oh01} (see also \cite{gz} and \cite{dema16}). However, even when $Z$ is a simple normal crossing divisor, Ohsawa's measure for a general singular metric does not seem to have explicit expressions,\footnote{However, the author was informed by Dano Kim that a more detailed understanding of the Ohsawa measure does exist according to his unpublished work.} which is often necessary in practical applications. When the centers of extension bear sufficient regularity one may obtain a more explicit upper bound. We give in Section \ref{ext from strata} an explicit upper bound of $\liminf\limits_{t\rightarrow-\infty} I_t$ by direct computations. Roughly speaking, the function $\psi$ defines the center $Z$, and $\phi$ determines the way the aforementioned neighborhood $V_t$ shrinks. In previous studies where explicit upper bounds are obtained, one mainly considers the situation $\phi=\psi=\log|\sigma_S|_{h_S}^2$ with $\sigma_S$ a holomorphic section of some vector bundle $E$ such that $\wedge^{\mathrm{rk}\, E} d\sigma_S$ is essentially nonzero along its zero set $Z=Z(\sigma_S)$; if this is the case, $V_t$ shrinks to $Z$ as a usual tubular neighborhood shrinks when its radius tends to zero, and the computation is rather simple. In our proof of Lemma \ref{int estimate} both $\phi$ and $\psi$ are less regular and hence complicate the calculation, which was achieved by using suitably chosen local coordinate systems.
  
\subsection*{\bf Acknowledgement} The main part of this paper is written during the period I visited Institut Fourier in 2018-2019. I am most grateful to Professor Demailly for his warm hospitality and generosity in sharing many of his interesting and challenging ideas and in sparing plenty of time for discussions. The strategy of proving Theorem \ref{from snc} was essentially suggested by him, and some of the crucial settings of Theorem \ref{ext snc} are inspired by discussions with him. I am indebted to Professors Qi'an Guan and Xiangyu Zhou for patiently answering my several questions about their work \cite{gz}. I also like to thank Mario Chan, Jiun-Cheng Chen, Shin-Yao Jow, Yih Sung, and Sz-Sheng Wang for many stimulating discussions, and thank Professors Dano Kim, and Takeo Ohsawa for giving comments. Finally, I would like to thank Professors Shing-Tung Yau, Yng-Ing Lee, Hui-Wen Lin and Chin-Lung Wang for their constant supports and encouragements.

\section{{\bf A general scheme for entension of holomorphic sections}}\label{general scheme}

In this section we provide the first part of the proof of Theorem \ref{ext snc} in a form of a metatheorem, Theorem \ref{meta ext}. Although our argument models on that for Theorem 2.1 in \cite{gz}, we include a proof here for completeness since some adjustments are made as explained in the introduction.

The following setting will be in force throughout the current section. 
\begin{set}
Let $\big(Y, (E,h), Z, Z_0,\psi,\phi, A,G\big)$ consist of
\begin{itemize}
\item[(i)] a K\"ahler manifold $Y$, 
\item[(ii)] an analytic subspace $Z=V(\mathcal I_Z)$ of $Y$,
\item[(iii)] an analytic set $Z_0$ of $Y$,
\item[(iv)] a holomorphic vector bundle $E$ on $Y$ with a smooth hermitian metric $h$,
\item[(v)] two usc functions $\xyz{Y\ar[r]^-{\psi}&[-\infty, \infty)}$ and $\xyz{Y\ar[r]^-{\phi}&[-\infty, A)}$ with $A\in (-\infty,\infty]$, and
\item[(vi)] a strictly positive function $G\in C^\infty\big((-A,\infty)\big)$
\end{itemize}
such that
\begin{align}\label{phi Z0}
\phi|_{Y\setminus Z_0}\text{ is smooth and }\phi^{-1}(-\infty)=Z_0,
\end{align}
\begin{align}\label{psi Z}
\psi|_{Y\setminus Z_0}\text{ is smooth and }\mathcal I(\psi)\subseteq\mathcal I_Z,
\end{align}
\begin{align}\label{cur 1}
\sqrt{-1}\Big(\Theta_{h}+\partial\overline\partial\psi\otimes\mathrm{id}_E\Big)\geqslant 0 \text{ in the sense of Nakano on } Y\setminus Z_0,
\end{align}
and 
\begin{align}\label{cur 2}
\sqrt{-1}\left(\Theta_{h}+\Big(\partial\overline\partial\psi+\frac{G'(-\phi)}{G(-\phi)}\partial\overline\partial\phi\Big)\otimes\mathrm{id}_E\right)\geqslant_{\mathrm{Nak}} 0\text{ on }Y\setminus Z_0.
\end{align}
\end{set}

Before giving the precise statement of the theorem, we first introduce a family of functions which will be used to construct truncations when dealing with singularity. 

%Before giving the precise statement of the theorem, we first introduce a family of functions which will be used to construct truncations when dealing with the singularities of $\phi$. 

\begin{defi}\label{Xab}
For every $0<a<b$ we let 
$$
\mathcal X_{a,b}:=\left\{\chi\in C^\infty(\mathbf R)\,\left|\,\chi\geqslant 0,\ \mathrm{supp}\ \chi\subseteq (a, b),\,\text{ and }\,\int_\mathbf R\chi=1\right.\right\}.
$$ 
For any $\chi\in\mathcal X_{a,b}$ we define $\tau_\chi\in C^\infty(\mathbf R)$ by integrating $\chi$ twice with initial values $\tau_\chi'(b)=1$ and $\tau_\chi(b)=b$.
\end{defi}
We remark that $\tau_\chi$ is convex and remains constant on $(-\infty,a]$, and $\tau_\chi(y)=y$ for every $y\in [b,\infty)$; in particular, $\tau_\chi\geqslant a$ and $0\leqslant \tau_\chi'\leqslant 1$. 
 
We state the following metatheorem. 
\begin{thm}\label{meta ext} Given $(u,\Omega,U_0,G)$ consisting of
\begin{itemize}
\item[(i)] a section $u\in\Gamma\big(Z,\mathcal O_Y(K_Y\otimes E)|_Z\big)$,
\item[(ii)] a relatively compact open subset $\Omega$ of $Y$ admitting a complete K\"ahler metric,  
\item[(iii)] a section $U_0\in\Gamma\big(Y,\mathcal O_Y(K_Y\otimes E)\big)$ such that $U_0|_{Z\cap \Omega}=u|_{Z\cap \Omega}$, and
\item[(iv)] $G\in \mathcal G_A$,
\end{itemize}
if $\chi\in\mathcal X_{a,b}$ for some $a<b$ and 
$$
\liminf\limits_{t\rightarrow-\infty} \int_{\Omega} \chi(\phi -t)\,e^{-\psi}\langle U_0\ra^2_{h} <\infty,
$$
then there exists $U_\Omega\in\Gamma\big(\Omega,\mathcal O_Y(K_Y\otimes E)\big)$ such that 
$U_{\Omega}|_{Z\cap \Omega}=u|_{Z\cap \Omega}$ and
\begin{align}\label{general scheme estimate}
\int_\Omega G''(-\phi)\,e^{-\psi}\langle U_\Omega\ra^2_{h}
\leqslant 
G'(\infty)\, \liminf\limits_{t\rightarrow-\infty} \int_{\Omega} \chi(\phi -t)\,e^{-\psi}\langle U_0\ra^2_{h}.
\end{align}
\end{thm}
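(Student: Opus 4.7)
The plan is to follow the $L^2$ extension strategy of Guan--Zhou \cite{gz}, modified to accommodate the two distinct weight functions: $\psi$, which cuts out the center $Z$ via $\mathcal I(\psi)\subseteq\mathcal I_Z$, and $\phi$, which drives a family of truncations concentrating toward $Z_0\supseteq Z$. I work on $\Omega\setminus Z_0$ (where both weights are smooth), equipped with a complete K\"ahler metric --- obtained by adding to the given metric a K\"ahler potential that blows up on $Z_0$ --- so that the Nakano-positivity statements (\ref{cur 1}) and (\ref{cur 2}) hold pointwise; the standard limiting procedure across the pluripolar set $Z_0$ will then return the final estimate to $\Omega$.

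Next, using the convex smoothing $\tau_\chi$, define the cutoff $\beta_t := \tau'_\chi(\phi-t) = \int_{-\infty}^{\phi-t}\chi$, so $\beta_t\equiv 0$ where $\phi<t+a$ (a neighborhood of $Z_0$) and $\beta_t\equiv 1$ where $\phi>t+b$. Put $v_t := (1-\beta_t)\,U_0$; then $v_t\equiv U_0$ on $\{\phi<t+a\}\supseteq$ (a neighborhood of $Z$), and
$$
\overline\partial v_t \;=\; -\chi(\phi-t)\,\overline\partial\phi \wedge U_0
$$
is a smooth $(0,1)$-form supported in the shell $\{\phi-t\in(a,b)\}$. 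The heart of the argument is to solve $\overline\partial w_t = \overline\partial v_t$ on $\Omega\setminus Z_0$ via the twisted Bochner--Kodaira--Nakano--H\"ormander identity with base weight $e^{-\psi}$ and twisting multipliers built from $G$ and a smooth regularization of $\phi-t$ via $\tau_\chi$. The multipliers are designed so that (a) the $\overline\partial\phi$-factor in the source is absorbed by the $\partial\phi\wedge\overline\partial\phi$-term produced by the Bochner identity, exploiting $G''>0$ and $G'^2-GG''>0$ from the definition of $\mathcal G_A$; (b) the $\partial\overline\partial\psi$- and $(G'/G)\,\partial\overline\partial\phi$-contributions of the total weight are matched exactly against the Nakano semi-positivity hypotheses (\ref{cur 1}) and (\ref{cur 2}) respectively; and (c) the leading multiplicative constant asymptotes to $\lim_{y\to\infty}G'(y) = G'(\infty)$. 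The resulting uniform-in-$t$ estimate will read
$$
\int_\Omega G''(-\phi)\, e^{-\psi}\, \la w_t\ra_h^2 \;\leqslant\; G'(\infty)\int_\Omega \chi(\phi-t)\, e^{-\psi}\, \la U_0\ra_h^2.
$$

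Finally, $U_t := v_t - w_t$ is $\overline\partial$-closed and hence holomorphic on $\Omega\setminus Z_0$; since $\overline\partial v_t=0$ and therefore $\overline\partial w_t=0$ in a neighborhood of $Z_0$, and $w_t$ is locally $L^2$ with respect to a weight that is positive away from its singularities, $w_t$ extends holomorphically across $Z_0$ by Riemann extension. The local $L^2$-integrability of $e^{-\psi}\la w_t\ra_h^2$ combined with $\mathcal I(\psi)\subseteq\mathcal I_Z$ from (\ref{psi Z}) forces $w_t|_Z = 0$; together with $v_t\equiv U_0$ near $Z$, this yields $U_t|_{Z\cap\Omega} = u|_{Z\cap\Omega}$. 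Extracting a weak-$L^2$ limit $U_\Omega$ of $(U_t)$ as $t\to-\infty$ and invoking lower semi-continuity of the $L^2$-norm under weak convergence, together with the hypothesis that $\liminf_{t\to-\infty}\int_\Omega \chi(\phi-t)e^{-\psi}\la U_0\ra_h^2<\infty$, produces $U_\Omega$ satisfying (\ref{general scheme estimate}). The main technical obstacle is items (b)--(c) of the twisted Bochner step: choosing the twisting multipliers so that the curvature form in the identity is Nakano semipositive under \emph{precisely} the two separate conditions (\ref{cur 1}) and (\ref{cur 2}) --- rather than a single combined weight as in \cite{gz} --- while still producing $G'(\infty)$ (and not the weaker $G'(-A^+)$) as the leading constant on the right-hand side.
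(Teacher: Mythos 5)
Your proposal follows essentially the same Guan--Zhou-style twisted $\overline\partial$ route as the paper: truncate via the convex smoothing $\tau_\chi$, put $v_t=(1-\tau'_\chi(\phi-t))U_0$, solve $\overline\partial w_t=\overline\partial v_t$ on $\Omega\setminus Z_0$ with ODE-derived multipliers coming from $G$ (the paper writes $\lambda=s(-\tau\circ\phi)$, $\mu=t(-\tau\circ\phi)$, $\nu=w(-\tau\circ\phi)$ with $s=G/G'$, $s+t=G'/G''$), use $\mathcal I(\psi)\subseteq\mathcal I_Z$ to force $w_t|_Z=0$, and let $t\to-\infty$. You also correctly identify the one genuinely new point, namely splitting the Nakano-positivity check across the two separate conditions (\ref{cur 1}) and (\ref{cur 2}); the paper does this via Lemma \ref{key} and the monotonicity Lemma \ref{stronger}, which establish $s(-\tau_\chi(\phi))\geqslant s(-\phi)\tau'_\chi(\phi)$ provided $b-a\leqslant s(-b)$ (automatic for $b$ negative enough), allowing one to dominate the curvature term by a $\tau'_\chi(\phi)$-multiple of the combination in (\ref{cur 2}) and throw in (\ref{cur 1}) for the remainder.

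The one gap to flag is the final limiting step. The twisted $\overline\partial$ estimate one actually obtains carries the weight $G''(-\tau_\chi(\phi-t))$ on the left, not $G''(-\phi)$; these only agree on $\{\phi-t\geqslant b\}$, and in general there is no pointwise comparison between them. You wrote the intermediate estimate with weight $G''(-\phi)$ and then invoked ``weak-$L^2$ limit and lower semicontinuity,'' but since the weight itself varies with $t$ and blows down near $Z_0$, this is not a weak limit in a fixed Hilbert space. What one actually needs is a local argument: exhaust $Y\setminus Z$ by relatively compact $O_m$, pick $t_{j_m}$ so far in the negative that $\tau_{\chi_{j_m}}(\phi)=\phi$ on $O_m$ (so the weight equals $G''(-\phi)$ there), extract a locally uniform limit via a normal-family argument, and then pass to the global estimate by Fatou. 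This is precisely the content of the paper's Lemma \ref{loc L2 conv => loc unif conv}. Your plan is otherwise correct and the core ideas are all present; this last step is where it needs to be made precise.
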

The proof of this theorem will be reduced to the following result. 
\begin{lem}\label{main lem}
Let $(u,G,\Omega,U_0)$ be as in Theorem \ref{meta ext}, and $a<b$ be real numbers such that $(G/G')(-b)\geqslant b-a$.\footnote{This holds when $b\rightarrow-\infty$ with $b-a$ bounded by a fixed constant. By (\ref{G limit infty}) we have
$
\lim\limits_{y\rightarrow\infty}\frac{G(y)}{y}=\lim\limits_{y\rightarrow\infty}G'(y)=G'(\infty)>0
$, and hence 
$
(G'/G)(y)\rightarrow 0
$
as $y\rightarrow\infty$.
}
For every $\chi\in \mathcal X_{a,b}$ there exists a section $U_\chi\in\Gamma\big(\Omega,\mathcal O_Y(K_Y\otimes E)\big)$ such that 
$U_\chi|_{\Omega\cap Z}=u|_{\Omega\cap Z}$ and 
$$
\int_{\Omega} G''\big(-\tau_\chi(\phi)\big)\,e^{-\psi}\big\langle U_\chi-\big(1-\tau_\chi'(\phi)\big)U_0\big\ra^2_{h}
\leqslant 
G'(\infty)\int_{\Omega}\chi(\phi)\, e^{-\psi}\langle U_0\ra_{h}^2.
$$
\end{lem}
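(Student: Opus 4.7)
The approach I would take is a standard Ohsawa–Takegoshi-style proof via the twisted $\overline\partial$ method, closely following Theorem 2.1 of \cite{gz} but accommodating the two separate weights $\psi$ and $\phi$ of the Setting. The starting point is to fix a smooth ansatz $V$ and then correct it by an $L^2$ solution of a $\overline\partial$-equation. Concretely, I set
\[
V := \bigl(1-\tau_\chi'(\phi)\bigr)\,U_0\in C^\infty\bigl(\Omega, K_Y\otimes E\bigr).
\]
Because $\tau_\chi'$ vanishes on $(-\infty,a]$ and equals $1$ on $[b,\infty)$, the form $V$ coincides with $U_0$ on the open neighborhood $\{\phi<a\}$ of $Z_0$ (which contains $Z$ by (\ref{phi Z0}) and (\ref{psi Z})) and vanishes identically on $\{\phi\geqslant b\}$. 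Using the holomorphicity of $U_0$ one computes
\[
\overline\partial V = -\tau_\chi''(\phi)\,\overline\partial\phi\wedge U_0 = -\chi(\phi)\,\overline\partial\phi\wedge U_0,
\]
a smooth $E$-valued $(n,1)$-form supported in the band $\{a<\phi<b\}\subseteq Y\setminus Z_0$. Writing $U_\chi := V - W$, the requirement $\overline\partial U_\chi = 0$ becomes the $\overline\partial$-equation $\overline\partial W = -\chi(\phi)\,\overline\partial\phi\wedge U_0$, to be solved with a suitable weighted $L^2$-control.

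I would then apply the twisted H\"ormander–Bochner–Kodaira $L^2$-estimate on $\Omega$ equipped with the complete K\"ahler metric from hypothesis (ii), in the weighted spaces built from $e^{-\psi}$ plus a twist by $\eta:=G(-\tau_\chi(\phi))$, which is smooth and strictly positive on $\Omega$ and takes values in $[G(-b),G(-a)]$ (so that $-\tau_\chi(\phi)$ remains in the domain $(-A,\infty)$ of $G$). Expanding
\[
\sqrt{-1}\,\partial\overline\partial\,\tau_\chi(\phi) = \chi(\phi)\,\sqrt{-1}\,\partial\phi\wedge\overline\partial\phi + \tau_\chi'(\phi)\,\sqrt{-1}\,\partial\overline\partial\phi
\]
and using $G$'s chain rule, the Bochner curvature operator splits into three kinds of contributions: a term $G''(-\tau_\chi(\phi))\sqrt{-1}\,\partial\tau_\chi(\phi)\wedge\overline\partial\tau_\chi(\phi)$ playing the Donnelly–Fefferman–Berndtsson positive role, a term $-G'(-\tau_\chi(\phi))\tau_\chi'(\phi)\sqrt{-1}\,\partial\overline\partial\phi\otimes\mathrm{id}_E$ that must be absorbed, and a term $-G'(-\tau_\chi(\phi))\chi(\phi)\sqrt{-1}\,\partial\phi\wedge\overline\partial\phi\otimes\mathrm{id}_E$ which, once contracted, is exactly what dualizes against the source $-\chi(\phi)\,\overline\partial\phi\wedge U_0$. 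Absorbing the middle term into a convex combination of the two curvature hypotheses (\ref{cur 1}) and (\ref{cur 2}) and invoking the standard Riesz-representation dualization against compactly supported test $(n,1)$-forms yields a solution $W$ to $\overline\partial W = \overline\partial V$ on $\Omega$ satisfying
\[
\int_\Omega G''(-\tau_\chi(\phi))\,e^{-\psi}\,\langle W\rangle_{h}^2 \leqslant G'(\infty)\int_\Omega \chi(\phi)\,e^{-\psi}\,\langle U_0\rangle_{h}^2,
\]
the constant $G'(\infty)$ entering through the supremum of $G'(-\tau_\chi(\phi))$ as in (\ref{G limit infty}).

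To conclude, I check the boundary behavior: the multiplier ideal inclusion (\ref{psi Z}) combined with $\int e^{-\psi}\langle W\rangle_h^2<\infty$ forces $W|_{Z\cap\Omega}=0$, while $V|_{Z\cap\Omega}=U_0|_{Z\cap\Omega}=u|_{Z\cap\Omega}$ because $Z\subseteq Z_0$ gives $\phi|_Z\equiv-\infty$ and hence $\tau_\chi'(\phi)|_Z=0$. Thus $U_\chi := V-W$ is the sought holomorphic extension of $u$, and since $U_\chi-(1-\tau_\chi'(\phi))U_0=-W$ the displayed estimate of the lemma is exactly the one produced above. \emph{The main obstacle} lies in Step 2: curvature hypothesis (\ref{cur 2}) evaluates $G'/G$ at $-\phi$, whereas the natural twist $\eta=G(-\tau_\chi(\phi))$ produces the analogous quantity at $-\tau_\chi(\phi)$. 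The assumption $(G/G')(-b)\geqslant b-a$ is precisely what controls $\tau_\chi(\phi)-\phi$ on the band $\{a<\phi<b\}$ (via convexity of $\tau_\chi$ and monotonicity of $G'/G$, which in turn follows from $G'^2-GG''>0$), and thereby allows the convex combination of (\ref{cur 1}) and (\ref{cur 2}) to dominate the negative contribution $-G'(-\tau_\chi(\phi))\tau_\chi'(\phi)\,\sqrt{-1}\,\partial\overline\partial\phi\otimes\mathrm{id}_E$ and make the Bochner term nonnegative in the Nakano sense.
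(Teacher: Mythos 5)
Your proposal follows essentially the same route as the paper's proof: the same ansatz $V=(1-\tau_\chi'(\phi))U_0$ with the $\overline\partial$-equation $\overline\partial W=-\chi(\phi)\,\overline\partial\phi\wedge U_0$, the same twisted $L^2$ method with weights built from $G$ (the paper formalizes this via the ODE system $s=G/G'$, $w=-\log G'$, $\mu=t(-\tau\circ\phi)$ and invokes Theorem \ref{L2 est tool}, which is algebraically equivalent to your Berndtsson-style twist), the same role for $(G/G')(-b)\geqslant b-a$ (made precise in the paper as Lemma \ref{stronger}, showing $S(-\tau_\chi(y))\geqslant S(-y)\,\tau_\chi'(y)$ and then feeding in (\ref{cur 1}) and (\ref{cur 2})), and the same endgame via the multiplier ideal $\mathcal I(\psi)\subseteq\mathcal I_Z$ and the vanishing of $\tau_\chi'(\phi)$ near $Z_0$. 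The one step you leave as a heuristic is the absorption of the $-G'(-\tau_\chi(\phi))\tau_\chi'(\phi)\,\sqrt{-1}\,\partial\overline\partial\phi$ contribution; the paper's Lemma \ref{stronger} is exactly the pointwise real-variable inequality that turns your ``convex combination'' sketch into a genuine Nakano nonnegativity statement, so you would need to supply that argument to close the proof.
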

\begin{proof}[Proof of Theorem \ref{meta ext} assuming Lemma \ref{main lem}]
Select a sequence $t_j\rightarrow -\infty$ as $j\rightarrow\infty$ such that 
$$
\lim\limits_{j\rightarrow\infty} \int_{\Omega}\chi(\phi-t_j)\,e^{-\psi}\langle U_0\ra^2_{h}=\liminf\limits_{t\rightarrow-\infty} \int_{\Omega} \chi(\phi -t)\,e^{-\psi}\langle U_0\ra^2_{h}.
$$
Let $a_j:=a+t_j$ and $b_j:=b+t_j$. We have $(G/G')(-b_j)\geqslant b_j-a_j=b-a$ for $j$ sufficiently large. Applying Lemma \ref{main lem} with the functions
$
\chi_j(\cdot):=\chi(\cdot -t_j)\in\mathcal X_{a_j,b_j}
$
we obtain sections
$$
U_{\chi_j} \in\Gamma\big(\Omega,\mathcal O_Y(K_Y\otimes E)\big)
$$
such that
$$
\int_{\Omega} G''\big(-\tau_{\chi_j}(\phi)\big)\,e^{-\psi}\big\langle U_{\chi_j}-\big(1-\tau_{\chi_j}'(\phi)\big)U_0\big\ra^2_{h}
\leqslant 
G'(\infty)\int_{\Omega}\chi_j(\phi)\, e^{-\psi}\langle U_0\ra_{h}^2.
$$
Fix a sequence of relatively compact open sets 
$$O_1\subseteq O_2\subseteq\cdots\subseteq O_m\subseteq O_{m+1}\subseteq\cdots$$ 
of $Y$ such that $Y\setminus Z=\bigcup\limits_m O_m$. There exists a strictly increasing sequence $j_m$ such that $b+t_{j_m}\leqslant \min\limits_{\overline O_m}\phi$ for every $m\in\mathbf N$. Then $\tau_{\chi_{j_m}}(\phi)|_{O_m}=\phi|_{O_m}$ and $\tau_{\chi_{j_m}}'(\phi)|_{O_m}=1$, and hence 
\begin{align*}
\begin{split}
&\int_{O_m} G''\big(-\tau_{\chi_{j_m}}(\phi)\big)\,e^{-\psi}\big\langle U_{\chi_{j_m}}\big\ra^2_{h}
%=
%\int_{U_m} G''\big(-\tau_{\chi_{j_m}}(\psi)\big)\,e^{-\psi}\big\langle U_{\chi_{j_m}}-\big(1-\tau_\chi'(\phi)\big)U_0\big\ra^2_{h}\\
%&
%\leqslant \int_{\Omega} G''\big(-\tau_{\chi_{j_m}}(\psi)\big)\,e^{-\psi}\big\langle U_{\chi_{j_m}}-\big(1-\tau_\chi'(\psi)\big)U_0\big\ra^2_{h}
\leqslant 
G'(\infty)\int_{\Omega}\chi_{j_m}(\phi)\,e^{-\psi} \langle U_0\ra_{h}^2\quad (m\in\mathbf N).
\end{split}
\end{align*}
To get the desired section, we seek to apply Lemma \ref{loc L2 conv => loc unif conv} with 
$$
(M,U_m,w_m)=\Big(\Omega,U_{\chi_{j_m}},G''\big(\!-\tau_{\chi_{j_m}}(\phi)\big)\Big).
$$
Note that $w_m$ converges to $w=G''(-\phi)$ almost everywhere as $m\rightarrow\infty$, and it is direct to check that the conditions of Lemma \ref{loc L2 conv => loc unif conv} are fulfilled to yield a section $U_\Omega\in\Gamma\big(\Omega,\mathcal O_Y(K_Y\otimes E)\big)$ such that 
$U_\Omega|_{Z\cap \Omega}=u|_{Z\cap \Omega}$ and
$$
\int_\Omega G''(-\phi)\,e^{-\psi}\langle U_\Omega\ra^2_{h}
\leqslant 
%\liminf\limits_{m\rightarrow\infty}\int_{O_m} G''\big(-\tau_{\chi_{j_m}}(\psi)\big)\,e^{-\psi}\big\langle U_{\chi_{j_m}}\big\ra^2_{h}
%\leqslant
G'(\infty)\, \liminf\limits_{m\rightarrow\infty} \int_{\Omega} \chi_{j_m}(\phi)\,e^{-\psi}\langle U_0\ra^2_{h}.
$$
\end{proof}
Now it remains to prove Lemma \ref{main lem}, and the rest of this section constitutes a proof.

\begin{pt}[{\bf $\overline\partial$-equations with truncated data}] 
For simplicity, in the following discussions we denote $\tau_\chi$ by $\tau$ for $\chi\in\mathcal X_{a,b}$. We manually extend the domains of $\tau$, $\tau'$, and $\tau''=\chi$ to include $-\infty$ by defining 
\begin{align}\label{value at infty}
\tau^{(j)}(-\infty):=\lim\limits_{y\rightarrow -\infty}\tau^{(j)}(y)=\tau^{(j)}(a)\quad (j=0,1,2).
\end{align}
Then $\tau(\phi)$, $\tau'(\phi)$, and $\tau''(\phi)$ are all smooth functions on $Y$ whose restrictions to the open neighborhood $\phi^{-1}\big([-\infty, a)\big)$ of $Z_0$ are constant functions. In particular,
\begin{align}\label{rhs}
\eta_\chi:=\overline\partial\Big(\big(1-\tau'(\phi)\big)U_0\Big),
\end{align}
can be viewed as a {\it smooth} $E$-valued $(n,1)$-form on $Y$. We also note that $\tau'(\phi)$, $\tau''(\phi)$, and $\eta_\chi$ all vanish on $\phi^{-1}\big([-\infty, a)\big)$.
We will first obtain a solution $\gamma=\gamma_{\chi}$ to the $\overline\partial$-equation
\begin{align}\label{d-bar to slove}
\overline\partial \gamma = \eta
\end{align}
on $\Omega\setminus Z$ with $L^2$-estimate and define  
$$
U_{\chi}:=\big(1-\tau'(\phi)\big)U_0 - \gamma_{\chi}.
$$
Then we shall obtain by a limiting process a holomorphic extension $U_\Omega$ of $u|_{Z\cap\Omega}$ on $\Omega$ with $L^2$-estimate.  
\end{pt}

\begin{pt}[{\bf Choosing auxiliary weight functions by solving ODEs}]\label{ode}
We first explain the presence of part of the conditions in Definition \ref{GA}, following the ODE approach in \cite{gz}. To exploit Theorem \ref{L2 est tool} to solve the aforementioned $\overline\partial$-equation with $L^2$-estimate, for any $\chi\in\mathcal X_{a,b}$ we consider functions $\lambda$, $\mu$, and $\nu$ of the form (with $\tau_\chi$ denoted by $\tau$)
$$\lambda=s(-\tau\circ\phi),\, \mu=t(-\tau\circ\phi),\ \text{ and }\ \nu=w(-\tau\circ\phi)$$
where $s>0$, $t>0$, and $w$ are smooth functions on $(-A,\infty)$, and define
\begin{align}\label{h0,h,}
\widehat h(\cdot,\cdot):=h(\cdot,\cdot)e^{-\psi}\ \text { and }\ \widetilde h(\cdot,\cdot)=h(\cdot,\cdot)e^{-\psi}e^{-\nu},
\end{align}
whose restriction on $Y\setminus Z_0$ is then smooth. %The functions $s$, $t$, and $u$ have to be chosen so that for every $m\in\mathbf N$ there exists a constant $C>0$ such that (\ref{appriori for twisted dbar}) holds for all $v\in\mathcal D^{n,1}(\Omega, E)$. 
To check the validity of (\ref{appriori for twisted dbar}), we proceed a direct calculation and obtain\footnote{Hereinafter we omit $\otimes\mathrm{id}_E$ from all appearances of $(1,1)$-forms for simplicity.}
\begin{align*}
\begin{split}
&\lambda  \Theta_{\widetilde h} - \partial \overline{\partial}
\lambda - \frac{1}{\mu}\partial{\lambda}\wedge\overline\partial{\lambda}
=
\lambda \Theta_{\widehat h} + \lambda\partial \overline{\partial}\nu-\partial \overline{\partial}
\lambda - \frac{1}{\mu}\partial{\lambda}\wedge\overline\partial{\lambda}\\
&\quad=(s'-sw')\circ(-\tau\circ\phi)\cdot (\tau''\circ\phi)\, \partial\phi\wedge\overline\partial\phi\\
&\quad\quad\quad +s(-\tau\circ\phi)\,\Theta_{\widehat h}+(s'-sw')\circ(-\tau\circ\phi)\cdot (\tau'\circ\phi)\, \partial\overline\partial\phi\\
&\quad\quad\quad+\left(sw''-s''-\frac{s'^2}{t}\right)\circ(-\tau\circ\phi)\cdot \partial(\tau\circ\phi)\wedge \overline\partial (\tau\circ\phi).
\end{split}
\end{align*}
As in \cite{gz}, we impose the following conditions on $s$, $t$, and $w$:
\begin{align}\label{s,t,u}
s>0,\ t>0,\ s'-sw'=1\ \text{ and }\  sw''-s''-\frac{s'^2}{t}=0.
\end{align}
Then $(e^{-w}s)'=e^{-w}$, and hence $s$ is of the form $e^w\int e^{-w}$. By setting 
$G=\int e^{-w}$, we have $G'=e^{-w}> 0$, 
\begin{align}\label{s,u}
s=\frac{G}{G'},\ \text{ and }\ w=-\log G'.
\end{align}
The second condition in (\ref{s,t,u}) implies that $s''-sw''-s'w'=(s'-sw')'=0$, and hence
\begin{align}\label{t}
t=\frac{-s'^2}{s''-sw''}=s\frac{-s'/s}{w'}.
\end{align} 
A direct computation yields that
$$
(s'/s)w'=\left(\frac{G'}{G}-\frac{G''}{G'}\right)\left(-\frac{G''}{G'}\right)=-\frac{(G'^2-GG'')G''}{GG'^2}
$$
and
\begin{align*}
s+t=s\left(1-\frac{s'/s}{w'}\right)=s\left(\frac{w'-(s'/s)}{w'}\right)=s\left(\frac{-1/s}{w'}\right)=\frac{-1}{w'}=\frac{G'}{G''}.
\end{align*}
Therefore, that $G=\int e^{-w}$ and the strictly positivity of $s$ and $t$ imply that 
\begin{align}\label{G conditions}
\ G>0,\ G'>0,\ G''>0,\ \text{ and }\ G'^2-GG''>0,
\end{align}
It is easy to see that, conversely, if $s$, $t$, and $w$ are determined by (\ref{s,u}) and (\ref{t}) according to a smooth function $G$ on $(-A,\infty)$ satisfying (\ref{G conditions}),
then (\ref{s,t,u}) holds. Note that every $G\in\mathcal G_A$ fulfils (\ref{G conditions}). We note for later use that
\begin{align}\label{G', G'' as wt}
e^{-w}=G'\ \text{ and }\ \frac{e^{-w}}{s+t}=G''. 
\end{align}
\end{pt}
\begin{pt}[{\bf Solving the $\overline\partial$-equations}] 
Now we assume that $G\in\mathcal G_A$ and that the functions $t$ and $u$ are constructed as in (\ref{s,u}) and (\ref{t}).
\begin{ptlem}\label{key} 
If $b-a\leqslant s(-b)$, then 
\begin{align*}
\sqrt{-1}\left(\lambda \Theta_{\widetilde h} - \partial \overline{\partial}
\lambda - \frac{1}{\mu}\partial{\lambda}\wedge\overline\partial{\lambda}\right)
\geqslant_{\mathrm{Nak}}
\sqrt{-1}\,\chi(\phi)\, \partial\phi\wedge\overline\partial\phi
\end{align*}
for every $\chi\in\mathcal X_{a,b}$.
\end{ptlem}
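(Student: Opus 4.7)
The plan is to substitute the ODE identities $s'-sw'=1$ and $sw''-s''-s'^2/t=0$, which $s$, $t$, $w$ satisfy by the construction in \ref{ode}, into the formula for $\lambda\Theta_{\widetilde h}-\partial\overline\partial\lambda-\tfrac{1}{\mu}\partial\lambda\wedge\overline\partial\lambda$ displayed just before (\ref{s,t,u}). Two of the four terms vanish identically, and the identity $\tau''=\chi$ handles the remaining coefficient, collapsing the identity to
\[
\lambda\Theta_{\widetilde h}-\partial\overline\partial\lambda-\frac{1}{\mu}\partial\lambda\wedge\overline\partial\lambda
=\chi(\phi)\,\partial\phi\wedge\overline\partial\phi
+s(-\tau(\phi))\,\Theta_{\widehat h}+\tau'(\phi)\,\partial\overline\partial\phi.
\]
Subtracting the target term thus reduces the lemma to the pointwise Nakano nonnegativity
\[
\sqrt{-1}\,\bigl(s(-\tau(\phi))\,\Theta_{\widehat h}+\tau'(\phi)\,\partial\overline\partial\phi\otimes\mathrm{id}_E\bigr)\geqslant_{\mathrm{Nak}} 0
\]
on $Y\setminus Z_0$, the locus on which $\phi$ and $\psi$ are smooth.

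To establish this, I would express the form as a nonnegative combination $\alpha\,A+\beta\,B$ of the two Nakano-positive forms
\[
A:=\sqrt{-1}\,\Theta_{\widehat h}\otimes\mathrm{id}_E,\qquad
B:=\sqrt{-1}\bigl(\Theta_{\widehat h}+\tfrac{G'(-\phi)}{G(-\phi)}\partial\overline\partial\phi\otimes\mathrm{id}_E\bigr),
\]
which are nonnegative by (\ref{cur 1}) and (\ref{cur 2}) respectively. Since $s=G/G'$, matching the coefficients of $\Theta_{\widehat h}$ and $\partial\overline\partial\phi$ uniquely forces
\[
\beta=\tau'(\phi)\,s(-\phi)\geqslant 0,\qquad \alpha=s(-\tau(\phi))-\tau'(\phi)\,s(-\phi),
\]
so everything reduces to verifying the scalar inequality $s(-\tau(\phi))\geqslant\tau'(\phi)\,s(-\phi)$.

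I would dispatch this by cases on the value of $\phi$. For $\phi\geqslant b$ both sides equal $s(-\phi)$, and for $\phi\leqslant a$ the right-hand side vanishes while $s>0$. The genuine case is $\phi\in(a,b)$, where I would combine three ingredients: (i) convexity of $\tau_\chi$ together with $\tau'$ increasing from $0$ on $(-\infty,a]$ to $1$ on $[b,\infty)$ yields $\tau(\phi)-\phi=\int_\phi^b(1-\tau'(s))\,ds\leqslant(1-\tau'(\phi))(b-a)$; (ii) from $s=G/G'$ and $G'^2-GG''>0$ one computes $s'=(G'^2-GG'')/(G')^2\in(0,1)$, whence the mean value theorem yields $s(-\phi)-s(-\tau(\phi))\leqslant\tau(\phi)-\phi$; (iii) the hypothesis $b-a\leqslant s(-b)$ together with monotonicity of $s$ and $\tau(\phi)\leqslant b$ gives $s(-\tau(\phi))\geqslant s(-b)\geqslant b-a$. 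Chaining these three bounds delivers $\tau'(\phi)\,s(-\phi)\leqslant s(-\tau(\phi))$.

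The principal obstacle I anticipate is the calibration of the hypothesis $b-a\leqslant s(-b)$ against the shape of the convex function $\tau_\chi$ and the estimate $s'<1$, so that $\alpha\geqslant 0$ holds uniformly on $(a,b)$; once these three ingredients are aligned the remainder of the argument is a formal unwinding of the ODE choices from \ref{ode}.
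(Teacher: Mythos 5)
Your proof is correct and follows essentially the same route as the paper: the paper reduces to the same nonnegativity of $s(-\tau(\phi))\,\Theta_{\widehat h}+\tau'(\phi)\,\partial\overline\partial\phi$ and then applies exactly the scalar inequality $s(-\tau(\phi))\geqslant\tau'(\phi)\,s(-\phi)$ (its Lemma \ref{stronger} with $S=s$) to split the form into the nonnegative combination you call $\alpha A+\beta B$. Your explicit conic-combination framing and your three-ingredient proof of the scalar bound are a reorganization of, not a departure from, the paper's argument.
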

\begin{proof}
We have
\begin{align*}
\begin{split}
&\quad\quad\sqrt{-1}\left(\lambda  \Theta_{\widetilde h} - \partial \overline{\partial}
\lambda - \frac{1}{\mu}\partial{\lambda}\wedge\overline\partial{\lambda}\right)
\\
&\quad\quad\quad=
\sqrt{-1}\Big((\chi\circ\phi)\,\partial\phi\wedge\overline\partial\phi
+
\Big[s(-\tau\circ\phi)\,\big(\Theta_h+\partial\overline\partial\psi\big)
+ 
(\tau'\circ\phi)\, \partial\overline\partial\phi\Big]\Big).
\end{split}
\end{align*}
It suffices to show that the last term is nonnegative if $b-a\leqslant s(-b)$. Note that $s=\frac{G}{G'}$. By applying Lemma \ref{stronger} below with $S=s$, we see that\footnote{This conclusion corresponds to (5.7) in \cite{gz}, which was stated there without a proof. It can be verified by Lemma \ref{stronger} here, which I learned via private communication with Guan and Zhou. The proof given here follows essentially their argument.} 
\begin{align*}
\begin{split}
& \sqrt{-1}\Big(s(-\tau\circ\phi)\big(\Theta_{h}+ \partial\overline\partial\psi\big)+ (\tau'\circ\phi)\, \partial\overline\partial\phi\Big)
\\
&
\quad\quad\quad\geqslant 
\sqrt{-1}(\tau'\circ\phi)\,\Big(s(-\phi)\big(\Theta_{h}+\partial\overline\partial\psi\big)+ \partial\overline\partial\phi\Big)\geqslant 0
\end{split}
\end{align*}
by our curvature assumption (\ref{cur 1}) and (\ref{cur 2}).
\end{proof}
\begin{ptlem}\label{stronger} For any $\chi\in\mathcal X_{a,b}$ and any $C^1$ function $S$, if 
$S'<1$ and $b-a\leqslant S(-b)$, 
then 
$
S\big(\!-\tau_\chi(y)\big)\geqslant S(-y)\,\tau_\chi'(y).
$ 
\end{ptlem}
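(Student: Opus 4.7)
The plan is to split into three regimes for $y$. For $y\geq b$ we have $\tau_\chi(y)=y$ and $\tau_\chi'(y)=1$, so the inequality holds with equality. For $y\leq a$ we have $\tau_\chi'(y)=0$ and $\tau_\chi(y)=\tau_\chi(a)\in[a,b]$, so the desired inequality reduces to $S(-\tau_\chi(a))\geq 0$, which follows from monotonicity of $S$ together with $S(-b)\geq b-a\geq 0$. (Monotonicity of $S$ is automatic in the intended application since $S=G/G'$ with $G\in\mathcal{G}_A$ satisfies $S>0$ and $S'\in(0,1)$ thanks to $G,G'>0$, $G''>0$, and $G'^2-GG''>0$.)

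The substantive case is $y\in[a,b]$. I would start from the algebraic identity
\begin{equation*}
S\bigl(-\tau_\chi(y)\bigr)-S(-y)\tau_\chi'(y)
=S(-y)\bigl(1-\tau_\chi'(y)\bigr)+\bigl[S\bigl(-\tau_\chi(y)\bigr)-S(-y)\bigr].
\end{equation*}
Since $\tau_\chi$ is convex with $\tau_\chi(b)=b$ and $\tau_\chi'(b)=1$, its tangent line at $b$ is $z\mapsto z$, so $\tau_\chi(y)\geq y$. Integrating $S'<1$ from $-\tau_\chi(y)$ up to $-y$ then gives $S(-\tau_\chi(y))-S(-y)\geq y-\tau_\chi(y)$, which reduces the problem to proving
\begin{equation*}
S(-y)\bigl(1-\tau_\chi'(y)\bigr)\geq \tau_\chi(y)-y.
\end{equation*}

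Two independent pointwise bounds now suffice. First, because $\tau_\chi''=\chi\geq 0$, the function $1-\tau_\chi'$ is nonincreasing on $[a,b]$, whence
\begin{equation*}
\tau_\chi(y)-y=\int_y^b\bigl(1-\tau_\chi'(s)\bigr)\,ds\leq (b-y)\bigl(1-\tau_\chi'(y)\bigr).
\end{equation*}
Second, since $S$ is increasing and $S(-b)\geq b-a$, for $y\in[a,b]$ one has $S(-y)\geq S(-b)\geq b-a\geq b-y$. Multiplying these two estimates and cancelling the common factor $1-\tau_\chi'(y)$ yields precisely what is needed, completing the case.

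The only subtle point I expect in writing this up is that the bare hypothesis $S'<1$ is not by itself enough: one must also invoke that $S$ is positive and increasing, which is implicit in the application via the definition of $\mathcal{G}_A$ but is not stated in the lemma. The main conceptual obstacle is thus just the matching of the convexity-based bound on $\tau_\chi(y)-y$ against the monotonicity-based bound on $S(-y)$; once these are paired the proof is essentially formal.
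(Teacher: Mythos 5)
Your proof is correct and essentially identical to the paper's: the same additive decomposition $S(-\tau(y))-S(-y)\tau'(y)=\bigl[S(-\tau(y))-S(-y)\bigr]+S(-y)\bigl(1-\tau'(y)\bigr)$, the same mean-value bound from $S'<1$, and the same convexity estimate $\tau_\chi(y)-y=\int_y^b\bigl(1-\tau_\chi'\bigr)\leq (b-y)\bigl(1-\tau_\chi'(y)\bigr)$ all appear in the paper's argument. Your caveat is also well taken: the paper's final step $(1-\tau'(y))\bigl(S(-y)-b+y\bigr)\geq(1-\tau'(y))\bigl(S(-b)-b+a\bigr)\geq 0$ silently uses that $S$ is increasing (to get $S(-y)\geq S(-b)$ on $[a,b]$ and, at $y\leq a$, $S(-\tau_\chi(a))\geq 0$), which is not a consequence of the lemma's stated hypotheses but does hold in the application since $S=G/G'$ has $S'=1-GG''/G'^2\in(0,1)$ for $G\in\mathcal G_A$.
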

\begin{proof} 
We only need to consider those $y$ lying in $(a, b)$ since $\tau'(y)=0$ if $y\leqslant a$, and $\tau(y)=y$ if $y\geqslant b$. For $y\in(a,b)$, then
\begin{align*}
\begin{split}
&S\big(\!-\tau(y)\big)-S(-y)\, \tau'(y)\\
&\quad\quad = S\big(\!-\tau(y)\big)-S(-y)+S(-y)-S(-y)\, \tau'(y)\\
&\quad\quad \geqslant -\tau(y)+y + S(-y)\big(1-\tau'(y)\big)\ \text{ by the mean value theorem and } S'<1\\
&\quad\quad =\int_y^b \big(\tau'(z)-1\big)dz+ S(-y)\big(1-\tau'(y)\big)\\
&\quad\quad \geqslant (b-y)\big(\tau'(y)-1\big) + S(-y)\big(1-\tau'(y)\big)\ \text{ since }\big(\tau'-1\big)'=\tau''\geqslant 0\\
&\quad\quad \geqslant \big(1-\tau'(y)\big)\big(S(-b)-b+a\big)\geqslant 0.
\end{split}
\end{align*}
\end{proof}
Recall the right hand side of (\ref{d-bar to slove}):
$$
\eta_\chi=\overline\partial\Big(\big(1-\tau'(\phi)\big)U_0\Big)=-\chi(\phi)\,\overline\partial\phi\wedge U_0
%+\big(1-\tau'(\phi)\big)\overline\partial U_0.
$$ 
To apply Theorem \ref{L2 est tool}, first note that $\Omega\setminus Z_0$ admits a complete K\"ahler metric (\cite{dema82} Lemma 1.5). Besides, since $\lambda$ and $\mu$ are smooth on $Y$, they are bounded on the relatively compact set $\Omega\setminus Z_0$. As for condition (\ref{appriori for twisted dbar}), for every $v\in\mathcal D^{n,1}(\Omega\!\setminus\! Z_0,\,E|_{\Omega\setminus Z_0})$ and (a fixed K\"ahler metric $g$) we have pointwise that 
\begin{align*}
\begin{split}
&(\eta_\chi,v)_{g,\widetilde h}
=-\big(\chi(\phi)\,\overline\partial\phi\wedge U_0, v\big)_{g,\widetilde h}
%\,+\,\big(\big(1-\tau'(\phi)\big)\overline\partial U_0, v\big)_{g,\widetilde h}
=-\big(\sqrt{\chi(\phi)}\, U_0, \sqrt{\chi(\phi)}\,\iota^{\overline\partial\phi}v\big)_{g,\widetilde h}
%\,+\,\big(\big(1-\tau'(\phi)\big)\overline\partial U_0, v\big)_{g,\widetilde h}
,
\end{split}
\end{align*}
and hence by Lemma \ref{key},
\begin{align*}
\begin{split}
\big|(\eta_\chi,v)_{L^2_{g,\widetilde h}}\big|^2 
&=
\left|\int_{\Omega\setminus Z_0}\big(\sqrt{\chi(\phi)}\, U_0, \sqrt{\chi(\phi)}\,\iota^{\overline\partial\phi}v\big)_{g,\widetilde h}\,dV_g\right|^2\\
&\leqslant
%\int_{\Omega\setminus Z_0}\chi(\phi)\, |U_0|_{g,\widetilde %h}^2\, dV_g
%\int_{\Omega\setminus Z_0}\chi(\phi)\, |\iota^{\overline\partial\phi}v|_{g,\widetilde h}^2\, dV_g\\
%&=
\int_{\Omega\setminus Z_0}\chi(\phi)\, |U_0|_{g,\widetilde h}^2\, dV_g
\int_{\Omega\setminus Z_0}\sqrt{-1}\chi(\phi)\, (\partial\phi\wedge\overline\partial\phi)[v,v]_{g,\widetilde h}\, dV_g\\
&\leqslant 
C'\int_{\Omega\setminus Z_0}\sqrt{-1}\Big(\lambda \Theta_{\widetilde h} - \partial \overline{\partial}
\lambda - \frac{1}{\mu}\partial{\lambda}\wedge\overline\partial{\lambda}\Big)[v,v]_{g,\widetilde h}\, dV_g
\end{split}
\end{align*}
where 
$$C'=\int_{\Omega\setminus Z_0}\chi(\phi)\, |U_0|_{g,\widetilde h}^2\, dV_g=\int_{\Omega\setminus Z_0}G'\big(-\tau_\chi(\phi)\big)\,e^{-\psi}\,\chi(\phi)\, \langle U_0\ra^2_h.$$
Since $w'=\frac{s'-1}{s}<0$ and $e^{-w}=G'$, we have $C'\leqslant 
G'(\infty)\,I_{\chi}$
with
\begin{align}
I_{\chi}:=\int_{\Omega\setminus Z_0}\chi(\phi)\,e^{-\psi}\langle U_0\ra^2_h<\infty.
\end{align}
By Theorem \ref{L2 est tool}, there exists a measurable $E$-valued  $(n,0)$-form $\gamma_{\chi}$ such that
\begin{align}\label{d-bar gamma}
\quad\quad\overline\partial \gamma_{\chi} = \overline\partial\Big(\big(1-\tau'(\phi)\big)U_0\Big)\text{ on } \Omega\setminus Z_0\text{ in the sense of current}
\end{align}
and
\begin{align}\label{gamma I}  
\begin{split}
&\quad\int_{\Omega\setminus Z_0} G''\big(-\tau_\chi(\phi)\big)\, e^{-\psi}\,|\gamma_{\chi}|^2_{g, h}\, dV_g \leqslant
G'(-\infty)\, I_{\chi}.
\end{split}
\end{align}
We let
$$
U_{\chi}:=\big(1-\tau'(\phi)\big)U_0-\gamma_{\chi}.
$$ 
By (\ref{d-bar gamma}), $U_\chi$ is a holomorphic section of $K_Y\otimes E$ on $\Omega\setminus Z_0$. Since
$
G''\big(-\tau(\phi)\big)|_{\Omega}
$
has strictly positive lower bounds, (\ref{gamma I}) implies that 
\begin{align}\label{h0 h L2}
\int_{\Omega\setminus Z_0}  \langle\gamma_{\chi}\ra^2_h\leqslant
e^{\sup_{\Omega} \psi}
\int_{\Omega\setminus Z_0}  e^{-\psi}\,|\gamma_{\chi}|^2_{g, h}\, dV_g <\infty.
\end{align}  
We clearly have
$$
\int_{\Omega} \,\big\langle\big(1-\tau'(\phi)\big)U_0\big\ra^2_{h}<\infty,
$$
and hence 
$$
\int_{\Omega\setminus Z_0} \langle U_\chi\ra^2_{h}<\infty. 
$$
Since $h$ is smooth, $U_\chi$ extends to a holomorphic section of $K_Y\otimes E$ on $\Omega$. As noted right after (\ref{rhs}), $\tau'(\phi)$ vanishes on an open neighborhood of $Z_0$, say, $V$. Thus, the measurable section
$
\gamma_\chi|_{V\cap \Omega},
$
which coincides with the holomorphic section 
$
U_\chi|_{V\cap \Omega}-U_0|_{V\cap \Omega}
$, 
is holomorphic. Now the last inequality in (\ref{h0 h L2}) implies that the local expression of $\gamma_\chi|_{V\cap \Omega}$ lies in the ideal sheaf $\mathcal I(\psi)\subseteq \mathcal I_Z$. This shows that $U_\chi|_{\Omega\cap Z}=U_0|_{\Omega\cap Z}=u|_{\Omega\cap Z}$. Finally, the required $L^2$-estimate is simply (\ref{gamma I}). This completes the proof of Lemma \ref{main lem}.
\end{pt}

\section{\bf Proof of Theorem \ref{ext snc}}\label{ext from strata}

In this section we finish the proof of Theorem \ref{ext snc}. The estimate (\ref{snc L2 est}) will be obtained by establishing an explicit upper bound of the right hand side of (\ref{general scheme estimate}). See Lemma \ref{int estimate} for the detail. 

We may restrict the given data to $X\setminus H$ for any analytic hypersurface $H$ which contains no $W\in\mathcal W$ and, by (\ref{bdd below}) and Riemann's extension theorem, it suffices to prove the theorem on $X\setminus H$. In particular, we may assume that $X$ is Stein, all vector bundles involved are trivial, and $S$ is a reduced snc divisor. Thus we have the adjunction maps
$$
\xyz{
\Gamma(X,K_X\otimes S\otimes F)
\ar[r]^-{(\cdot)_{S_J}}&
\Gamma\big(S_J,K_{S_J}\otimes (S^J\otimes F)|_{S_J}\big).}
$$

In the situation of {\bf (II)}, we may apply the standard regularization process and reduce to the ${\bf (I)}$ with $X$ a Stein manifold and $F$ a line bundle. Note that (\ref{cur snc'}) is preserved by the regularization process since $\widetilde\Theta_{s,t_\bullet}$ is a linear combination of $\Theta_{h_F}$ and $\Theta_j$ ($j=1,\dots,q$) with constant coefficients. Besides, when $h_F$ is smooth, (\ref{cur snc'}) implies both (\ref{cur snc 1}) and (\ref{cur snc 2}). This can be seen by Lemma \ref{cur comp} and the fact that $G'/G$ is strictly decreasing. Therefore, it remains to prove {\bf (I)} assuming that $X$ is Stein.

We may assume that, for every $J\subseteq\{1,\dots,q\}$, the family $\mathcal W$ contains either all or none of the irreducible components of $S_J$. More precisely, for every $J\subseteq \{1,\dots,q\}$ let $Q_J$ be the union of all irreducible components of $S_J$ which are not a member of $\mathcal W$. It suffices to remove from $X$ a hypersurface which contains $\bigcup_J Q_J$ but contains no $W\in\mathcal W$.

Let  
$
\mathrm J_0=\mathrm J_0(\mathcal W)\big(=\{J_W\,|\, W\in\mathcal W\}\big)
$. For every $J\in\mathrm J_0$, let 
$$
u_J\in \Gamma\big(S_J,K_{S_J}\otimes S^J\otimes F|_{S_J}\big)
$$ 
be the section determined by $u_J|_W=u_W$ for every $W$ associated to $J$, and fix an ``initial extension''  
$
U_0\in \Gamma(X,K_X\otimes S\otimes F)
$ 
such that 
$
(U_0)_{S_J}=u_J\ (J\in\mathrm J_0).
$
We fix an exhaustion 
$
\Omega_1\Subset\Omega_2\Subset\cdots
$
of $X$ with strongly pseudoconvex open subsets. It is clear that the proof will be completed by applying Lemma \ref{ext snc lem} below with $\Omega=\Omega_k$ for every $k$ together with a normal family argument. 

\begin{lem}\label{ext snc lem}  If $\Omega$ is a relatively compact open subset of $X$ admitting a complete K\"ahler metric, then there exists $U_\Omega\in\Gamma\big(\Omega,\mathcal O_X(K_X\otimes S\otimes F)\big)$ such that $(U_\Omega)_{S_J\cap \Omega}=u_J|_{S_J\cap \Omega}$ for every $J\in\mathrm J_0$ and
\begin{align*}
&\int_\Omega G''\big(-\log|\sigma_S|_{h_S}^{2\gamma_\bullet}\big)\,
\frac{|\sigma_S|_{h_S}^{2\delta_\bullet}}{\prod\limits_{J\in\mathrm J_0} \Big(\sum\limits_{j\in  J} |\sigma_j|_{h_j}^2\Big)^{\delta |J|}} 
\left \langle \frac{U}{\sigma_S}\right\ra^2_{h_F} 
\\
&\quad\leqslant 
G'(\infty)
\sum\limits_{J\in\mathrm J_0}\frac{B_{|J|}(\delta)}{\sum\limits_{j\in J}\gamma_j}
\int_{S_J} 
\frac{|\sigma^J|_{h^J}^{2\delta_\bullet}}{
\prod\nolimits_{J'\in\mathrm J_0\setminus\{J\}}\,\Big(\sum \limits_{j\in J'}|\sigma_j|^2_{h_j}\Big)^{\delta |J|}}
\left\langle \frac{u_J}{\sigma^J}\right\ra^2_{h_F}.
\end{align*}
\end{lem}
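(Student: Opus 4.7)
The strategy is to apply Theorem~\ref{meta ext} on $Y=X$ to the holomorphic vector bundle $E:=S\otimes F$ with the smooth metric $h:=h_S\otimes h_F$, taking the center of extension $Z:=\underline{\mathcal W}$ and the singular locus $Z_0:=\bigcup_{j}S_j$. Before applying the theorem I would use Riemann's extension together with (\ref{bdd below}) to remove from $X$ every hypersurface $S_j$ with $\gamma_j=0$; such $j$ lies outside $J(\mathcal W)$ and $S_j$ contains no $W\in\mathcal W$, so this does not affect the conclusion, but afterwards $\gamma_j>0$ for every $j$, so that the weight
$$\phi:=\log|\sigma_S|_{h_S}^{2\gamma_\bullet}$$
satisfies $\phi^{-1}(-\infty)=Z_0$ and is smooth off $Z_0$. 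As companion weight I would take
$$\psi:=\sum_{J\in\mathrm{J}_0}\delta|J|\log\sum_{j\in J}|\sigma_j|_{h_j}^2+\sum_{j=1}^q(1-\delta_j)\log|\sigma_j|_{h_j}^2,$$
giving $e^{-\psi}=|\sigma_S|_{h_S}^{2\delta_\bullet-2}\prod_{J\in\mathrm{J}_0}\big(\sum_{j\in J}|\sigma_j|_{h_j}^2\big)^{-\delta|J|}$. An initial holomorphic extension $U_0\in\Gamma(X,K_X\otimes S\otimes F)$ with $(U_0)_{S_J}=u_J$ for every $J\in\mathrm{J}_0$ exists by Cartan~B on the Stein manifold $X$.

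The curvature hypotheses (\ref{cur 1}), (\ref{cur 2}) of Theorem~\ref{meta ext} are then exactly (\ref{cur snc 1}), (\ref{cur snc 2}): using $\sqrt{-1}\,\partial\overline\partial\log|\sigma_j|_{h_j}^2=-\sqrt{-1}\,\Theta_{h_j}$ off $S_j$, direct computation shows $\sqrt{-1}(\Theta_h+\partial\overline\partial\psi\otimes\mathrm{id}_E)=\widetilde\Theta$ and $\sqrt{-1}\big(\Theta_h+(\partial\overline\partial\psi+(G'/G)(-\phi)\,\partial\overline\partial\phi)\otimes\mathrm{id}_E\big)=\widetilde\Theta-(G'/G)(-\phi)\sum_j\gamma_j\Theta_{h_j}\otimes\mathrm{id}_E$. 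The multiplier-ideal inclusion $\mathcal I(\psi)\subseteq \mathcal I_{\underline{\mathcal W}}$ is verified pointwise in snc coordinates: at a generic $p\in W_{\mathrm{reg}}$ take $z_1,\dots,z_n$ with $\sigma_j=z_j$ for $j\in J_W$ and write $|z_j|^2=r^2\xi_j$ with $(\xi_j)$ on the standard simplex. Admissibility of $\mathcal W$ forbids any $J'\in\mathrm{J}_0$ from being strictly contained in $J_W$, so every factor $(\sum_{j\in J'}|\sigma_j|^2)^{\delta|J'|}$ with $J'\neq J_W$ stays bounded away from zero; hence $e^{-\psi}\sim r^{-2|J_W|}\prod_{j\in J_W}\xi_j^{\delta-1}$ near $p$, and $|f|^2e^{-\psi}$ is locally integrable iff $f$ vanishes on $W$. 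Away from $\underline{\mathcal W}$ the hypothesis (\ref{bdd below}) makes $e^{-\psi}$ locally bounded, so $\mathcal I(\psi)=\mathcal O_X$ there.

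Theorem~\ref{meta ext} thus produces $U_\Omega\in\Gamma(\Omega,\mathcal O_X(K_X\otimes S\otimes F))$ with $(U_\Omega)_{S_J\cap\Omega}=u_J|_{S_J\cap\Omega}$ and
$$\int_\Omega G''(-\phi)\,e^{-\psi}\langle U_\Omega\rangle_h^2\leqslant G'(\infty)\liminf_{t\to-\infty}\int_\Omega\chi(\phi-t)\,e^{-\psi}\langle U_0\rangle_h^2,$$
for any $\chi\in\mathcal X_{a,b}$ satisfying $(G/G')(-b)\geqslant b-a$. Using $\langle U_\Omega\rangle_h^2=|\sigma_S|_{h_S}^2\langle U_\Omega/\sigma_S\rangle_{h_F}^2$ (after local trivialization of $S$), the left side is exactly the left side of the lemma. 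The remaining task, which is the true technical core, is the explicit upper bound
$$\liminf_{t\to-\infty}\int_\Omega\chi(\phi-t)\,e^{-\psi}\langle U_0\rangle_h^2\leqslant\sum_{J\in\mathrm{J}_0}\frac{B_{|J|-1}(\delta)}{\sum_{j\in J}\gamma_j}\int_{S_J}\frac{|\sigma^J|_{h^J}^{2\delta_\bullet}\,\langle u_J/\sigma^J\rangle_{h_F}^2}{\prod_{J'\neq J}\big(\sum_{j\in J'}|\sigma_j|_{h_j}^2\big)^{\delta|J'|}}.$$
I would localize by a partition of unity subordinate to snc charts around generic points of each $W\in\mathcal W$: on such a chart the support $\{\phi-t\in(a,b)\}$ is a tube $\sum_{j\in J_W}\gamma_j\log|z_j|^2\approx t$ that shrinks to $W$ as $t\to-\infty$. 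Writing $z_j=\sqrt{\xi_j}\,r\,e^{\sqrt{-1}\,\theta_j}$ for $j\in J_W$ with $(\xi_j)\in\Delta_{|J_W|-1}$, the $r$-integration against $\chi(\phi-t)$ contributes $1/\sum_{j\in J_W}\gamma_j$ after the substitution $\tau=(\sum_{j\in J_W}\gamma_j)\log r^2$ and a dominated-convergence argument using $\int\chi=1$, whereas the simplex integration of $\prod_{j\in J_W}\xi_j^{\delta-1}$ over $\Delta_{|J_W|-1}$ produces exactly $B_{|J_W|-1}(\delta)$. The section $U_0$ contributes through its adjunction residue $(U_0)_W=u_W$ along $\{z_j=0:j\in J_W\}$, recovered in the $r\to 0$ limit.

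The principal obstacle is executing this last computation rigorously: $\phi$ and $\psi$ are simultaneously non-smooth along the entire divisor $S$, so one must verify both that the contributions from the non-generic loci $W_1\subset W$ and from intersections with other $W'\in\mathcal W$ are negligible as $t\to-\infty$, and that the smooth conformal factors $a_j$ in $|\sigma_j|_{h_j}^2=|z_j|^2 a_j$ drop out cleanly in the limit. This is essentially the content of the explicit calculation promised in the paper's outline. Once the lim-inf bound is in hand, summing the local contributions over $W\in\mathcal W$ and then repackaging via the decomposition $S_J=\bigsqcup_{W:J_W=J}W$ (using the earlier reduction that $\mathcal W$ contains all components of each $S_J$ with $J\in\mathrm{J}_0$) completes the proof of the lemma.
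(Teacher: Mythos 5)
Your proposal follows essentially the same route as the paper's: you apply Theorem~\ref{meta ext} with $h=h_S\otimes h_F$, the same weights
$e^{-\psi}=|\sigma_S|_{h_S}^{2\delta_\bullet-2}\prod_{J\in\mathrm J_0}\big(\sum_{j\in J}|\sigma_j|_{h_j}^2\big)^{-\delta|J|}$ and $\phi=\log|\sigma_S|_{h_S}^{2\gamma_\bullet}$, and then reduce the required $L^2$ estimate to an explicit upper bound for $\liminf_{t\to-\infty}\int_\Omega\chi(\phi-t)e^{-\psi}\langle U_0\rangle_h^2$, which is precisely what the paper isolates as Lemma~\ref{int estimate}. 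Your verification of $\mathcal I(\psi)\subseteq\mathcal I_Z$ at generic points of $W$ and of the curvature identities matches the paper's (your assignment $Z=\underline{\mathcal W}$, $Z_0=\bigcup_jS_j$ is in fact the setting in which the conditions \eqref{phi Z0}--\eqref{psi Z} are satisfied; the paper's printed $Z$ and $Z_0$ are interchanged relative to what its own hypotheses force, so your version is the consistent one).

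The one substantive difference from a complete proof is that you explicitly defer the ``principal obstacle,'' namely carrying out the spherical-coordinate computation of the $\liminf$ rigorously (the partition-of-unity localization, the Jacobian factor $\widetilde A_p$, the separate treatment of points off $\bigcup_{J\in\mathrm J_0}S_J$ where the contribution must be shown to vanish, and the passage to the $r\to0$ limit via monotone and dominated convergence). This is exactly the content of the paper's Lemma~\ref{int estimate}, and you have correctly identified all the ingredients (the substitution producing $1/\sum_{j\in J}\gamma_j$, the simplex integral producing $B_{|J_W|-1}(\delta)$, the recovery of $u_J$ via the adjunction residue at $r=0$); but as written this step remains a sketch rather than a proof. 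Since the paper itself packages this computation as a separate lemma on which Lemma~\ref{ext snc lem} depends, your outline reflects the same division of labor, and the missing work is the content of that supporting lemma rather than a new idea.
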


\begin{proof}
Applying Theorem \ref{meta ext} with 
$$
Y=X\setminus 
\Bigg(\bigcup\limits_{j\notin\bigcup\mathrm{J}_0} S_j
\cup
\Big(\bigcup\limits_{J\in\mathrm{J}_0} S_J\Big)_{\mathrm{sing}}\Bigg),
$$
$$
h=h_S\otimes h_F,\ Z=Y\cap\bigcup\limits_{j\in\bigcup\mathrm{J}_0}S_j,\ Z_0=Y\cap\bigcup\limits_{J\in\mathrm{J}_0} S_J,
$$
$$
\psi=\log\Bigg(|\sigma_S|^{2(1-\delta_\bullet)}_{h_S}\!
\prod\limits_{J\in\mathrm{J}_0}\,\Big(\sum \limits_{j\in J}|\sigma_j|^2_{h_j}\Big)^{\delta | J|}\Bigg), \text{ and }\, \phi=\log |\sigma_S|^{2\gamma_\bullet}_{h_S},
$$
we see that the proof will be completed by Lemma \ref{int estimate} below.
\end{proof}

\begin{lem} \label{int estimate}
\begin{align*}
\begin{split}
&\limsup\limits_{t\rightarrow-\infty} \int_{\Omega} \chi(\phi -t)\,e^{-\psi}\langle U_0\ra^2_{h_S\otimes h_F}\\
&\quad\quad 
\leqslant
\sum\limits_{J\in\mathrm{J}_0}\frac{B_{|J|-1}(\delta)}{\sum\limits_{j\in J}\gamma_j}
\int_{S_J} 
\frac{|\sigma^J|_{h_S^J}^{2\delta_\bullet}}{
\prod\limits_{J'\in\mathrm{J}_0\setminus\{J\}}\,\Big(\sum \limits_{j\in J'}|\sigma_j|^2_{h_j}\Big)^{\delta |J'|}}
\left\langle \frac{u_J}{\sigma^J}\right\ra^2_{h_F}.
\end{split}
\end{align*}
\end{lem}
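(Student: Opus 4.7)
The plan is to localize the integral around each $W\in\mathcal W$ via an adapted cover of $\underline{\mathcal W}$, discard the contribution away from all $W$'s using a direct radial estimate, and reduce each chart contribution to an explicit Dirichlet-type integral on the simplex $\Delta_{|J_W|-1}$.

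First, I would remove further hypersurfaces containing no $W\in\mathcal W$ to reduce to the case $\gamma_j=0$ for every $j\notin J(\mathcal W)$. Together with the earlier reduction that $\mathcal W$ contains every irreducible component of $S_J$ whenever $J\in\mathrm J_0(\mathcal W)$, this forces $\mathrm J_0(\mathcal W)$ to be an antichain: a strict inclusion $J'\subsetneq J$ in $\mathrm J_0$ would produce a member of $\mathcal W$ strictly containing another by a codimension count, violating admissibility. Next, cover a neighborhood of $\underline{\mathcal W}$ by coordinate charts $U_W$ ($W\in\mathcal W$) with local coordinates $z=(z_1,\dots,z_n)$ such that $W\cap U_W=\{z_1=\cdots=z_k=0\}$ where $k=|J_W|$, $\sigma_j=z_{\pi(j)}s_j$ for $j\in J_W$ (with $\pi:J_W\to\{1,\dots,k\}$ a bijection and $s_j$ a local frame for $S_j$), $\sigma_j$ is a non-vanishing unit for $j\notin J_W$, and $U_W$ is disjoint from every other $W'\in\mathcal W$. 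Writing $U_0=\tilde g\,dz_1\wedge\cdots\wedge dz_n\otimes s_1\otimes\cdots\otimes s_q$ and $G_0=\tilde g/\prod_{j\notin J_W}(\sigma_j/s_j)$, the Poincar\'e residue identifies $u_{J_W}/\sigma^{J_W}$ with $G_0|_W\,dz_{k+1}\wedge\cdots\wedge dz_n$ on $W\cap U_W$.

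On the complement $\Omega\setminus\bigcup_W U_W$, the support of $\chi(\phi-t)$ lies near $\bigcup_{j\in J(\mathcal W)}S_j$; a point of this support outside every $U_W$ sits on some $S_j$ ($j\in J(\mathcal W)$) away from all $W\in\mathcal W$. The antichain property forces $\{j\}\notin\mathrm J_0(\mathcal W)$ for such $j$ (otherwise the point would sit on some $W$ associated to $\{j\}$), so all factors $(\sum_{i\in J'}|\sigma_i|^2_{h_i})^{\delta|J'|}$ remain bounded below. The remaining singular factor in the integrand is then $|z_1|^{2(\delta-1)}$, with $z_1$ a local defining function of $S_j$; the radial integration over the shell $|z_1|^2\sim e^{t/\gamma_j}$ gives a quantity of order $e^{t\delta/\gamma_j}$, vanishing as $t\to-\infty$ since $\delta>0$.

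On each chart $U_W$, introduce rescaled coordinates $w_j=b_j^{1/2}z_j$ ($j\in J_W$) with $b_j=|s_j(0,z_\perp)|^2_{h_j}$. The key point is that the Jacobian $\prod b_j^{-1}$ in $\prod d\mu_{z_j}=\prod b_j^{-1}\prod d\mu_{w_j}$ exactly cancels the factor $\prod b_j$ appearing in $\la U_0/\sigma_S\ra^2_{h_F}=|G_0|^2_{h_F}\prod b_j/\prod|w_j|^2$, so that the leading-order integrand is $b_j$-free. Applying the polar-simplex substitution $w_j=r_je^{i\theta_j}$, $|w_j|^2=R\alpha_j$, $R=\sum_{j\in J_W}|w_j|^2$, $(\alpha_1,\dots,\alpha_{k-1})\in\overline{\Delta_{k-1}}$ (with Jacobian $d\rho_1\cdots d\rho_k=R^{k-1}\,dR\,d\alpha_1\cdots d\alpha_{k-1}$ for $\rho_j=|w_j|^2$), the powers of $R$ from $\prod|w_j|^{2\delta}$, $(\sum|w_j|^2)^{\delta k}$, $\prod|w_j|^{-2}$ and the Jacobian collapse to $R^{-1}\,dR=ds$ with $s=\log R$. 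In these variables $\phi=\Gamma s+\sum_{j\in J_W}\gamma_j\log\alpha_j+O(1)+O(R)$ with $\Gamma=\sum_{j\in J_W}\gamma_j$, so setting $u=\phi-t$ and using $\int\chi(u)\,du=1$, the $s$-integral contributes $1/\Gamma$, the $\theta$-integrations contribute $(2\pi)^k$, and the $\alpha$-integration contributes $\int_{\Delta_{k-1}}\prod\alpha_j^{\delta-1}\,d\alpha=B_{k-1}(\delta)/\pi^k$; these combine with the $2^{-k}$ Lebesgue-to-polar Jacobian to give $B_{|J_W|-1}(\delta)/\Gamma$, and identifying $|G_0|^2|_W\,dV_\perp$ with $\la u_{J_W}/\sigma^{J_W}\ra^2_{h_F}$ on $W$ yields exactly the $W$-summand of the claimed right-hand side. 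Summing over $W$ and using $S_J=\bigcup_{W:\,J_W=J}W$ from our reductions completes the estimate. The main technical obstacle is controlling the $O(R)$-corrections arising both from $|s_j(z)|^2_{h_j}=b_j+O(R)$ and from the perturbation $\eta$ of $\phi$ that makes $du\ne\Gamma\,ds$ exactly; these are absorbed via dominated convergence, using that $|G_0|^2_{h_F}$ is bounded on compacts and $\int\chi(u)\,du=1$ provides uniform $L^1$-control along the shrinking supports.
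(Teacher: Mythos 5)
Your overall strategy is the same as the paper's: cover $\overline\Omega$ by coordinate charts adapted to the strata, pass to rescaled transversal coordinates so that $e^{\phi}$ becomes a monomial, use a polar--simplex coordinate system to produce the Dirichlet integral over $\Delta_{k-1}$ giving $B_{|J_W|-1}(\delta)/\sum_{j\in J_W}\gamma_j$, show contributions away from $\underline{\mathcal W}$ vanish by a radial estimate, and control the remaining error terms by dominated convergence. Those match the paper's Claim~1/Claim~2 split and the spherical substitution $\zeta_i=\sigma_i^{1/2}re^{\sqrt{-1}\theta_i}$ almost step by step.

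The step I cannot accept is the opening claim that ``removing further hypersurfaces containing no $W\in\mathcal W$'' reduces the statement to the case $\gamma_j=0$ for every $j\notin J(\mathcal W)$. The numbers $\gamma_j$ are part of the data and enter through $\phi=\sum_j\gamma_j\log|\sigma_j|^2_{h_j}$; deleting $S_j$ from $X$ (and shrinking $\Omega$ away from it) makes $|\sigma_j|^2_{h_j}$ nonvanishing on $\Omega$, but it does not delete the term $\gamma_j\log|\sigma_j|^2_{h_j}$ from $\phi$. That term is a bounded, nonconstant function on $\Omega$, so replacing $\phi$ by the truncated $\phi'$ shifts the level sets $\{\phi-t\in(a,b)\}$ in a way that varies along $W$ and cannot be absorbed into a translation of $t$. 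Moreover the lemma is ultimately invoked for an exhaustion $\Omega=\Omega_k\nearrow X$, so one cannot in the end stay away from $\bigcup_{j\notin J(\mathcal W)}S_j$. The paper does not perform this reduction. Instead, it builds the effect of the ``extraneous'' $\gamma_j$ into the rescaling $\zeta_i = z_i\,e^{-\varphi_{j_i}/2}\widetilde A_p^{1/2}$ with $\widetilde A_p=\bigl(\prod_{j\notin J_p}|\sigma_j|^{2\gamma_j}_{h_j}\bigr)^{1/\sum_{j\in J_p}\gamma_j}$, which makes $e^{\phi}=\prod_i|\zeta_i|^{2\gamma_{j_i}}$ \emph{exactly}; the factors of $\widetilde A_p$ then cancel between $e^{-\psi}$ and the Jacobian~(\ref{jacobi}), which is precisely why the final bound involves only $\sum_{j\in J_W}\gamma_j$. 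That cancellation has to be exhibited by the computation; it cannot be assumed up front. Your version of the rescaling ($w_j=b_j^{1/2}z_j$ with $b_j$ frozen on the central fiber, corrections pushed into $O(R)$ errors) is a harmless variant of the paper's exact rescaling, but it likewise needs the $\widetilde A_p$ factor to treat $\gamma_j\neq0$ for $j\notin J(\mathcal W)$. Once you drop the ``reduction'' and carry $\widetilde A_p$ through the change of variables, your argument agrees with Lemma~\ref{int estimate}'s proof.
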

\begin{proof}
It will be convenient to set $t=\log\varepsilon$ and define $\widetilde \chi(s) := \chi(\log s)$. We now analyse the integral
$$
I(\varepsilon)=\int_{\Omega}
\chi(\phi-\log\varepsilon)
\,e^{-\psi}\,\langle U_0\ra^2_{h_S\otimes h_F}
=
\int_{\Omega}\frac{\widetilde\chi(e^{\phi}/\varepsilon)\, \langle U_0\ra^2_{h_S\otimes h_F}}{|\sigma_S|^{2(1-\delta_\bullet)}_{h_S}\!\prod\limits_{J\in\mathrm{J}_0}\,\Big(\sum \limits_{j\in J}|\sigma_j|^2_{h_j}\Big)^{\delta | J|}}.
$$
We choose for every $p \in \overline\Omega$ a relatively compact  open neighborhood $V_p$ satisfying the following conditions: if $p\notin\underline{\mathcal W}$, then $\overline V_p\cap \underline{\mathcal W}=\emptyset$; if $p\in\underline{\mathcal W}$, then $\overline V_p\cap\underline{\mathcal W}=\overline V_p\cap S_{J_p}$. We let 
\begin{align}\label{Ap}
A_p(x)
:=
|\sigma^{J_p}(x)|_{h^{J_p}}^{2(1-\delta_\bullet)}
\cdot \prod\limits_{J\in\mathrm{J}_0\setminus\{J_p\}}\,\Big(\sum \limits_{j\in J}|\sigma_j(x)|^2_{h_j}\Big)^{\delta |J|}
\quad (x\in X)
\end{align}
and
$$
\widetilde A_p(x)
:=
\left(|\sigma^{J_p}(x)|_{h^{J_p}}^{2 \gamma_\bullet}\right)^{\frac{1}{\sum\nolimits_{j\in J_p}\gamma_j}}=
\Big(\prod\limits_{j\notin J_p}|\sigma_j(x)|^{2\gamma_j}_{h_j}\Big)^{\frac{1}{\sum\nolimits_{j\in J_p}\gamma_j}}
\quad (x\in X).
$$
If $J_p$ consists of $j_1<\dots<j_k$, there exist an open neighborhood $V_p$ of $p$, a coordinate system $(z_1,\dots,z_n)$ on $V_p$, and a frame $e_j$ of $S_j|_{V_p}$ for every $j\in\{1,\dots,q\}$ such that $\sigma_{j_i}|_{V_p}=z_ie_{j_i}$ ($i=1,\dots, k=| J_p|$) . 
We let $(x_i,y_i):=(\mathrm{Re}\!\ z_i,\mathrm{Im}\!\ z_i)$, 
and let
$\varphi_{j}\, (j=1,\dots,q)$ and $\varphi^{J_p}$
denote the local weights of $h_j\, (j=1,\dots,q)$ and $h^{J_p}$ with respect to the associated local frames, respectively.  
Note that both $A_p$ and $\widetilde A_p$ are strictly positive on $V_p$. We let $(\xi_i,\eta_i)=(\mathrm{Re}\, \zeta_i,\, \mathrm{Im}\,\zeta_i)$ with
$$
\zeta_i
:=
z_i\,e^{-\frac{1}{2}\varphi_{j_i}}\widetilde A_p^{\frac{1}{2}}
\quad (i=1,\dots,| J_p|).
$$
We will write $| J_p|$ as $k=k(p)$ from now on for simplicity. By shrinking $V_p$ suitably we may assume that the following statements hold.
\begin{itemize}
\item[•] 
$
(\xi_1, \eta_1,\dots, \xi_k,\eta_k,x_{k+1},y_{k+1},\dots,x_n,y_n)
$ 
is a real coordinate system on $V_p$ with respect to which
the open set $V_p$ is represented by the cartescian product $D_p\times V_p'$ with
$$
D_p=
\left\{(\xi_\bullet,\eta_\bullet)\in \mathbf R^{2k}\,\left|\,\sum\nolimits_{i=1}^k |\xi_i|^2+|\eta_i|^2\leqslant R^2\right.\right\}
$$ 
for some $R=R_p<1$ and $V_p'\subseteq\mathbf R^{2n-2k}\simeq\mathbf C^{n-k}$ an open set.
\item[•] The closure $\overline{V}_p$ is compact and is disjoint from $D_j$ if $j\notin J_p$; in particular, $A_p$ and $\widetilde A_p$ both have strictly positive lower bounds on $\overline V_p$.
\item[•] The coordinate systems $(z_1,\dots,z_n)$ and $
(\xi_1, \eta_1,\dots, \xi_k,\eta_k,x_{k+1},y_{k+1},\dots,x_n,y_n)
$ and the frame sections $e_1,\dots, e_q$ are all defined on a neighborhood of $\overline{V}_p$. 
\end{itemize}
With respect to such a coordinate system we have
\begin{align}\label{e psi}
e^{\psi}
=\left\{
\begin{array}{ccc}
|\zeta_1^2\cdots\zeta_k^2|^{1-\delta}
\big(|\zeta_1|^2+\cdots+|\zeta_k|^2\big)^{\delta k}A_p\widetilde A_p^{-k}&\text{if}&p\in\overline\Omega\cap\bigcup\nolimits_{J\in\mathrm{J}_0}S_J;\\
|\zeta_1^2\cdots\zeta_k^2|^{1-\delta}
A_p\widetilde A_p^{-(1-\delta)k}&\text{if}&p\in\overline\Omega\setminus\bigcup\nolimits_{J\in\mathrm{J}_0}S_J
\end{array}
\right.
\end{align}
and
\begin{align}\label{e phi}
e^{\phi}=|\zeta_1|^{2\alpha_1}\cdots |\zeta_k|^{2\alpha_k}\ \text{ with }\ (\alpha_1,\dots,\alpha_k):=(\gamma_{j_1},\dots,\gamma_{j_k}).
\end{align}
Choose a partition of unity $\{\varrho_l\}_{l\in\mathbf N}$ subordinate to $\{V_p\}_{p\in X}$. By the local finiteness of the partition of unity, we may assume that there is a finite set $P\subseteq \overline\Omega$ and $l(p)\in\mathbf N$ such that $(\sum_{p\in P}\varrho_{l(p)})|_{\overline\Omega}=1$ and $\mathrm{supp}\,\varrho_{l(p)}\subseteq V_p$ for every $p\in P$.  
Note that 
$
I(\varepsilon)\leqslant
\sum\limits_{p\in P} I_p(\varepsilon)
$
where
$$I_p(\varepsilon):=\int_{V_p} \varrho_{l(p)}\,\widetilde\chi(e^{\phi}/\varepsilon)\, e^{-\psi}\,\langle U_0\ra^2_{h_S\otimes h_F}.$$
We adopt the following abbreviations
$$\begin{array}{ll}
(x,y)=(x_1,y_1,\dots,x_n,y_n),& dx\, dy=dx_1\,dy_1 \cdots dx_n\, dy_n,\\
(\xi,\eta)=(\xi_1,\eta_1,\dots ,\xi_k,\eta_k),&  d\xi\, d\eta=d\xi_1\, d\eta_1\cdots d\xi_k\, d\eta_k,\\ 
(x',y')=(x_{k+1},y_{k+1},\dots ,x_n,y_n),& dx'\, dy'=dx_{k+1}\, dy_{k+1} \cdots dx_n\, dy_n
\end{array}$$
and write
$$U_0\big|_{V_p}= U_p\otimes  e_1\otimes\cdots\otimes e_q
\otimes (dz_1 \wedge \cdots \wedge dz_n)$$
with $U_p$ a holomorphic section of $F$ on $V_p$. 
If in particular $p\in \bigcup\limits_{J\in\mathrm{J}_0}S_J$, we also write 
$$u_{J_p}\big|_{S_{J_p}\cap V_p} = u_p\otimes e^{J_p}\big|_{S_{J_p}\cap V_p}
\otimes (dz_{k+1} \wedge \cdots \wedge dz_{n}),$$
with $u_p$ a holomorphic section of $F|_{S_{J_p}}$ on $S_{J_p}\cap V_p$; if this is the case, since $(U_0)_J=u_J$ for every $J\in \mathrm{J}_0$, we have on $S_{J_p}\cap V_p$ 
\begin{align}\label{res explicit}
U_p(0,\dots,0,z_{k+1}, \cdots, z_n) = u_p(z_{k+1},
\cdots, z_{n}).
\end{align}
We let
\begin{align}\label{Xi}
\Xi^{(p)}(\xi,\eta,x',y'):=\varrho_{l(p)}(\xi,\eta,x',y')
\cdot
\big|U_p(z)\big|_{h_F}^2\,
e^{-\varphi^{J_p}}\, A_p^{-1}.
\end{align}
It is direct to see that there exist bounded functions $\tau_i$, $\upsilon_i$, $\widetilde\tau_i$, and $\widetilde\upsilon_i$ ($i=1,\dots, k$) on $\overline V_p$ such that
\begin{align}\label{jacobi}
e^{-\sum_{j\in J_p}\varphi_j}\, dx\, dy =
\big(\widetilde A_p\big)^{-k}\,\,\Big(1 + \sum_{i=1}^k(\xi_i\tau_i+\eta_i\upsilon_i)\Big) d\xi\, d\eta\, dx'\, dy'
\end{align}
and
\begin{align}\label{Xi-Xi}
\Xi^{(p)}(\xi,\eta,x',y')-\Xi^{(p)}(0,0,x',y') = \sum_{i=1}^k(\xi_i\widetilde\tau_i+\eta_i\widetilde\upsilon_i).
\end{align}

Now we are ready to analyse $I_p(\varepsilon)$.\\
\\
{\bf Claim 1.} $I_p(\varepsilon)\rightarrow 0$ as $\varepsilon\rightarrow 0$ if $p\in P\setminus\bigcup\nolimits_{J\in\mathrm{J}_0}S_J$.
 
Consider the polar coordinate systems in the $\xi_i\eta_i$-directions: 
$$
\xi_i+\sqrt{-1}\,\eta_i=\zeta_i=r_i\, e^{\sqrt{-1}\,\theta_i}\quad (i=1,\dots, k).
$$
By (\ref{e psi}), (\ref{e phi}), (\ref{jacobi}), (\ref{Xi-Xi}), and that both $A_p$ and $\widetilde A_p$ have strictly positive lower bounds on $\overline V_p$, there exists a constant $C_p>0$ independent of $\varepsilon$ such that 
\begin{align*}
\begin{split}
I_p(\varepsilon)&
=
\int_{V_p} \frac{\widetilde\chi(e^{\phi}/\varepsilon)\, \Xi^{(p)}(\xi,\eta,x',y') \, \Big(1 + \sum_{i=1}^k(\xi_i\tau_i+\eta_i\upsilon_i)\Big) d\xi\, d\eta\, dx'\, dy'}{|\zeta_1^2\cdots \zeta_k^2|^{1-\delta}
\widetilde A_p^{\delta k}}\\
\\
%&=
%\int_{V_p} \frac{\widetilde\chi(e^{\phi}/\varepsilon)\, \Xi^{(p)}(\xi,%\eta,x',y') \, \Big(1 + \sum_{i=1}^k \xi_i\tau_i+\eta_i\upsilon_i\Big) d\xi\, d\eta\, dx'\, dy'}{|\zeta_1^2\cdots \zeta_k^2|^{1-\delta}\widetilde A_p^{\delta k}}
%\\
&\leqslant
C_p \int_{\prod_i[0,\,R^{1/\alpha_i})} \chi\left(\log\frac{r_1^{2\alpha_1}\cdots r_k^{2\alpha_k}}{\varepsilon}\right)(r_1\cdots r_k)^{2\delta}\, \frac{dr_1}{r_1}\cdots \frac{dr_k}{r_k},
\end{split}
\end{align*}
which converges to $0$ as $\varepsilon\rightarrow 0$ by the dominated convergence theorem, since
$
\chi
$
is compactly supported.
\\
\\
{\bf Claim 2.} If $p\in P\cap \bigcup\limits_{J\in\mathrm{J}_0}S_J$, then
$$
\limsup\limits_{\varepsilon\rightarrow 0} I_p(\varepsilon)\leqslant \frac{B_{| J_p|-1}(\delta)}{\sum\nolimits_{j\in  J_p}\gamma_j}\int_{V_p'}\Xi^{(p)}(0,0,x',y')\,dx'\,dy'.
$$

To see this, note that $\chi$, $\tau_j$, $\upsilon_j$, $\widetilde\tau_j$, $\widetilde\upsilon_j$, and $\Xi^{(p)}$ are all bounded on $\overline V_p$, and hence there exists a constant $C_p>0$ independent of $\varepsilon$ such that
\begin{align*}
\begin{split}
I_p(\varepsilon)
&=
\int_{V_p} \frac{\widetilde\chi(e^{\phi}/\varepsilon)\, \Xi^{(p)}(\xi,\eta,x',y') \, \Big(1 + \sum_{i=1}^k(\xi_i\tau_i+\eta_i\upsilon_i)\Big) d\xi\, d\eta\, dx'\, dy'}{\big(|\zeta_1^2\cdots \zeta_k^2|\big)^{1-\delta}
\big(|\zeta_1|^2+\cdots+|\zeta_k|^2\big)^{\delta k}}\\
&
\leqslant I_p^{(0)}(\varepsilon)\, \int_{V_p'}\Xi^{(p)}(0,0,x',y')\,dx'\,dy'+2C_p\, I_p^{(1)}(\varepsilon)\, \int_{V_p'}\,dx'\,dy'
\end{split}
\end{align*}
where
$$
I_p^{(m)}(\varepsilon):=\int_{B_R(0)} 
 \frac{\widetilde\chi(e^{\phi}/\varepsilon) \big(|\zeta_1|^2+\cdots+|\zeta_k|^2\big)^{\frac{m}{2}}\, d\xi\, d\eta}{\big(|\zeta_1^2\cdots \zeta_k^2|\big)^{1-\delta}
\big(|\zeta_1|^2+\cdots+|\zeta_k|^2\big)^{\delta k}}\quad (m=0,1).
$$
We consider the following ``spherical coordinate system'' in the total $\xi\eta$-direction: 
$$
\xi_i+\sqrt{-1}\,\eta_i=\zeta_i=\sigma_{i}^{\frac{1}{2}}\, r\, e^{\sqrt{-1}\,\theta_i}\quad (i=1,\dots,k;\ \sigma_k :=1-\sigma_1-\cdots -\sigma_{k-1}),
$$
with  
$$
r\in [0,1),\,(\sigma_1,\dots,\sigma_{k-1})\in\Delta_{k-1},\text{ and } (\theta_1,\dots,\theta_k)\in [0,2\pi]^k
$$ 
where
$$
\Delta_{k-1}=\big\{(\sigma_1,\dots,\sigma_{k-1})\in [0,\infty)^k\,\big|\, \sigma_1+\cdots+\sigma_{k-1}\leqslant 1\big\}.
$$
A direct commutation shows that
$$
|\zeta_1^2\cdots\zeta_k^2|^{1-\delta}\, \big(|\zeta_1|^2+\cdots+|\zeta_k|^2\big)^{\delta k} 
= 
(\sigma_1\cdots\sigma_{k-1}\,\sigma_k)^{1-\delta}\,r^{2k},
$$
$$
e^{\phi}= 
\sigma_1^{\alpha_1}\cdots\sigma_{k-1}^{\alpha_{k-1}}\,\sigma_k^{\alpha_k}\,r^{2(\alpha_1+\cdots+\alpha_k)},
$$
and
$$
d\xi\, d\eta=\frac{1}{2^{k-1}}r^{2k}\, \frac{dr}{r}\,d\sigma_1\cdots d\sigma_{k-1}\,
d\theta_1\cdots d\theta_k.
$$
Thus
\begin{align*}
I_p^{(m)}(\varepsilon)
&=
2\pi^k\int_{\Delta_{k-1}}\left( \int_0^R
\chi\left(\log\frac{(\prod_i\sigma_i^{\alpha_i})r^{2\sum_i\alpha_i}}{\varepsilon}\right)\,
r^m\,\frac{dr}{r}\right)
\frac{d\sigma_1\cdots d\sigma_{k-1}}{(\sigma_1\cdots\sigma_{k-1}\,\sigma_k)^{1-\delta}}.
\end{align*}
Note that
$$
\int_{\Delta_{k-1}}\frac{d\sigma_1\cdots d\sigma_{k-1}}{(\sigma_1\cdots\sigma_{k-1}\,\sigma_k)^{1-\delta}}<\infty\ \text{ for every }\ \delta\in(0,1].
$$
Since $\chi$ is compactly supported,
$$
\chi\left(\log\frac{(\prod_i\sigma_i^{\alpha_i})r^{2\sum_i\alpha_i}}{\varepsilon}\right)\rightarrow 0\ \text{ as }\ \varepsilon\rightarrow 0
$$
for almost every $(\sigma_1,\dots,\sigma_{k-1},r)\in\Delta_{k-1}\times [0,R)$, and hence
$I_p^{(1)}(\varepsilon)\rightarrow 0$ as $\varepsilon\rightarrow 0$ by the dominated convergence theorem. Finally,
\begin{align*}
\begin{split}
I_p^{(0)}(\varepsilon)
&=
2\pi^k\int_{\Delta_{k-1}}\left( \int_0^R
\chi\left(\log\frac{(\prod_i\sigma_i^{\alpha_i})r^{2\sum_i\alpha_i}}{\varepsilon}\right)\frac{dr}{r}\right)
\frac{d\sigma_1\cdots d\sigma_{k-1}}{(\sigma_1\cdots\sigma_{k-1}\,\sigma_k)^{1-\delta}}\\
&
=
\frac{\pi^k}{\sum_i\alpha_i}\int_{\Delta_{k-1}}\left( \int_0^{\frac{(\prod_i\sigma_i^{\alpha_i})R^{2\sum_i\alpha_i}}{\varepsilon}}
\chi(\log v)\frac{dv}{v}\right)
\frac{d\sigma_1\cdots d\sigma_{k-1}}{(\sigma_1\cdots\sigma_{k-1}\,\sigma_k)^{1-\delta}}\\
&\quad\longrightarrow \frac{\pi^k}{\sum_i\alpha_i}\int_{\Delta_{k-1}}
\frac{d\sigma_1\cdots d\sigma_{k-1}}{(\sigma_1\cdots\sigma_{k-1}\,\sigma_k)^{1-\delta}}\ \text{ as }\ \varepsilon\rightarrow 0
\end{split}
\end{align*}
by the monotone convergence theorem (
$
\int_0^\infty
\chi(\log v)\,\frac{dv}{v}=\int_\mathbf R\chi=1
$).
This completes the proof of Claim 2.

In summary, 
\begin{align*}
\begin{split}
&\limsup_{\varepsilon\rightarrow 0}I(\varepsilon)\leqslant 
\sum\limits_{p\in P\cap\bigcup\nolimits_{J\in\mathrm{J}_0}S_J}
\frac{B_{| J_p|-1}(\delta)}{\sum\nolimits_{j\in J_p}\gamma_j}\int_{V_p'}\Xi^{(p)}(0,0,x',y')\,dx'\,dy'.
\end{split}
\end{align*}
By (\ref{Ap}), (\ref{res explicit}), and (\ref{Xi}),
\begin{align*}
&
\int_{V_p'}\Xi^{(p)}(0,0,x',y')\,dx'\,dy'\\
&\quad=
\int_{V_p'} \varrho_{l(p)}(0,0,x',y')\frac{ 
\big|u_p(z_{k(p)+1},\dots,z_n)\big|_{h_F}^2\,
e^{-\varphi^{J_p}(0,0,x',y')}
}{
 A_p(0,0,x',y')
}
dx'\, dy'\\
&\quad=
\int_{S_J\cap V_p}
\varrho_{l(p)}
\frac{\langle u_{J_p}\ra^2_{h^{J_p}\otimes h_F}}{
|\sigma^{J_p}|_{h^{J_p}}^{2(1-\delta)}
\prod\limits_{J\in\mathrm{J}_0\setminus\{J_p\}}\,\Big(\sum \limits_{j\in J}|\sigma_j|^2_{h_j}\Big)^{\delta | J|}}.
\end{align*}
Therefore,
\begin{align*}
\begin{split}
&\limsup\limits_{\varepsilon\rightarrow 0}I(\varepsilon)\leqslant 
\\
&\quad\quad 
\sum\limits_{J\in\mathrm{J}_0}\frac{B_{| J|-1}(\delta)}{\sum\limits_{j\in J}\gamma_j}
\int_{S_J} 
\frac{|\sigma^J|_{h^J}^{2\delta_\bullet}}{
\prod\limits_{J'\in\mathrm{J}_0\setminus\{J\}}\,\Big(\sum \limits_{j\in J'}|\sigma_j|^2_{h_j}\Big)^{\delta |J'|}}
\left\langle \frac{u_J}{\sigma^J}\right\ra^2_{h_F}.
\end{split}
\end{align*}
\end{proof}

\section{\bf Proof of Theorem \ref{from snc}} \label{app}
In the following we assume that $\mathcal W$ is an $\mathcal S$-snc family. We first prove a lemma which enables extension of sections satisfying certain vanishing conditions.

Consider the restriction maps:
\begin{align*}
\xyz{
\Gamma(X,K_X\otimes S\otimes F)\ar[r]^-\alpha
&
\Gamma\big(\underline{\mathcal W},(K_X\otimes S\otimes F)|_{\underline{\mathcal W}}\big)\ar[d]^-\beta\\
&
\Gamma\big(\underline{\mathcal W}',(K_X\otimes S\otimes F)|_{\underline{\mathcal W}'}\big).
}
\end{align*}
\begin{lem}\label{van ext}
(1) $\ker\,\beta\subseteq \mathrm{im}\,\alpha$ if $(F,h_F)$ is a holomorphic vector bundle with a smooth hermitian metric such that $\sqrt{-1}\, \Theta_{h_F}\succcurlyeq\pm\sqrt{-1}\, \Theta_{h_j}\otimes\mathrm{id}_F$ for $j=1,\dots,q$.\\
(2) Suppose that $(F,h_F)$ is a holomorphic line bundle with a singular hermitian metric such that $\sqrt{-1}\, \Theta_{h_F}\succcurlyeq\pm\sqrt{-1}\, \Theta_{h_j}$ for $j=1,\dots,q$. Given $u\in \ker\,\beta$, if $\mathcal I(h_F|_W)=\mathcal O_W$ for every $W\in\mathcal W$ along which $u$ is not identically $0$,
then there exists $U\in \Gamma\big(X,\mathcal I(h_F)(K_X\otimes S\otimes F)\big)$ such that $U|_{\underline{\mathcal W}}=u$.
\end{lem}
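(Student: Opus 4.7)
The plan is to apply Theorem \ref{ext snc}---part {\bf (I)} for (1) and part {\bf (II)} for (2)---after using the vanishing $u|_{\underline{\mathcal W}'}=0$ to verify the integrability condition (\ref{snc int finite}). The key preliminary observation is that $u\in\ker\beta$ forces, for every $W\in\mathcal W$, the section $u_W$ to vanish on every codimension-one irreducible component of the snc divisor $W\cap S^{J_W}$ on $W$ (these components being exactly the members of $\mathcal W'$ supported in $W$). Since $\sigma^{J_W}|_W$ cuts out this reduced snc divisor on $W_{\mathrm{reg}}$, $u_W$ is divisible by $\sigma^{J_W}|_{W_{\mathrm{reg}}}$, and
\[
v_W:=u_W/\sigma^{J_W}\in\Gamma\bigl(W_{\mathrm{reg}},\,K_{W_{\mathrm{reg}}}\otimes F|_{W_{\mathrm{reg}}}\bigr)
\]
is a genuine holomorphic section of the adjoint bundle (without the $S^W$-twist).

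For (1), I would choose $\delta>0$ and $\gamma_j>0$ ($j\in J(\mathcal W)$) sufficiently small, set $\delta_j=\delta$ for $j\in J(\mathcal W)$ and $\delta_j=0$ otherwise, $\gamma_j=0$ for $j\notin J(\mathcal W)$, and pick $G\in\mathcal G_A$ for an appropriate $A$ so that (\ref{bdd below}) is satisfied. The curvature hypothesis $\sqrt{-1}\,\Theta_{h_F}\succcurlyeq\pm\sqrt{-1}\,\Theta_{h_j}\otimes\mathrm{id}_F$ guarantees the Nakano positivity bounds (\ref{cur snc 1}) and (\ref{cur snc 2}) for all sufficiently small $\delta,\gamma_\bullet$. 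For (\ref{snc int finite}), the holomorphicity of $v_W$ together with the smoothness of $h_F$ reduces the question to local integrability of $\prod_{j\notin J_W}|\sigma_j|_{h_j}^{2\delta_j}\big/\prod_{J'\in\mathrm J_0\setminus\{J_W\}}\bigl(\sum_{j\in J'}|\sigma_j|_{h_j}^2\bigr)^{\delta|J'|}$; factors with $J'\not\subseteq J_W$ have poles on a codimension-$\geqslant 1$ subvariety of $W$ with exponent $\delta|J'|<1$ for $\delta$ small, hence are locally integrable, while for $J'\subsetneq J_W$ appearing in $\mathrm J_0$, the admissibility of $\mathcal W$ together with the snc hypothesis forces the corresponding $W'\in\mathcal W$ to be disjoint from $W$, so one may pass to a small neighborhood of the connected component of $\underline{\mathcal W}$ containing $W$ and drop these $J'$ from the product. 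Theorem \ref{ext snc}{\bf (I)} applied on each such piece, together with a standard $\overline\partial$-gluing (available since $h_F$ is smooth), yields $U\in\Gamma(X,K_X\otimes S\otimes F)$ with $U|_{\underline{\mathcal W}}=u$.

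For (2) the same parameter selection works with Theorem \ref{ext snc}{\bf (II)}: the curvature condition (\ref{cur snc'}) follows from $\sqrt{-1}\,\Theta_{h_F}\succcurlyeq\pm\sqrt{-1}\,\Theta_{h_j}$ once $\delta,\gamma_\bullet$ and the ratios $G'(-A^+)/G(-A^+)$ are taken small enough. The additional hypothesis $\mathcal I(h_F|_W)=\mathcal O_W$ on each $W$ with $u_W\not\equiv 0$, combined with the strong openness theorem \cite{gz2} and its sharp refinements \cite{hiep,lempert}, produces some $\varepsilon>0$ such that $\mathcal I\bigl(h_F^{1+\varepsilon}|_W\bigr)=\mathcal O_W$, and hence local $L^{1+\varepsilon}$-integrability of $\langle v_W\rangle_{h_F}^2$, which combined with the mild singularity of the weight verifies (\ref{snc int finite}) by H\"older; the $L^2$-estimate (\ref{snc L2 est}) then places the resulting extension in $\Gamma\bigl(X,\mathcal I(h_F)(K_X\otimes S\otimes F)\bigr)$. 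The principal obstacle in both parts is the verification of (\ref{snc int finite}): the divisibility $u_W=\sigma^{J_W}\cdot v_W$ produced by vanishing on $\underline{\mathcal W}'$ is essential to cancel the built-in $|\sigma^{J_W}|^{-2}$ singularity in $\langle u_W/\sigma^{J_W}\rangle^2$, and in the singular case the strong openness theorem supplies precisely the extra room in the multiplier ideal needed to complete the argument.
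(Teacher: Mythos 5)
Your proposal follows the same overall strategy as the paper's proof: use the vanishing $u\in\ker\beta$ to divide out $\sigma^{J_W}$ from $u_W$, then invoke Theorem \ref{ext snc} with all parameters $\delta$, $\delta_\bullet$, $\gamma_\bullet$ (and the ratio $G'/G$, via a suitable $G\in\mathcal G_A$) made small enough that (\ref{cur snc 1})--(\ref{cur snc 2}) (resp.\ (\ref{cur snc'})) hold, and verify the integrability condition (\ref{snc int finite}) by a local coordinate computation, invoking strong openness in the singular case to gain the $\delta$-room. The paper makes the same observation — locally, $u_a = z_1\cdots z_{k'}\,v_a$ because $u\in\ker\beta$ — and also selects an explicit family $G_{\varepsilon,H}$ with $G'(-A^+)/G(-A^+)=\varepsilon H(-A^+)$ small, $\gamma_j=1$ on $J(\mathcal W)$, so these details of your parameter choice are cosmetically different but in the same spirit. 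Where you diverge is the treatment of those $J'\in\mathrm J_0\setminus\{J_W\}$ with $J'\subsetneq J_W$: you propose to exploit disjointness of $W$ and $W'$, restrict to a neighborhood of one ``connected component'' of $\underline{\mathcal W}$, and glue by a $\overline\partial$-argument. The paper does none of this; its local estimate is carried out directly in coordinates, and the successive-extension scheme in Section~\ref{app} needs genuine global sections of $K_X\otimes S\otimes F$ at each stage. Note also that your fix does not actually help with (\ref{snc int finite}) as stated, since for $J'\subsetneq J_W$ the factor $\sum_{j\in J'}|\sigma_j|^2_{h_j}$ vanishes identically on $W\subseteq S_{J_W}\subseteq S_{J'}$, independently of how far $W'$ is from $W$; dropping the factor requires changing which family Theorem~\ref{ext snc} is applied to, which is a different (and more delicate) move than a neighborhood restriction plus $\overline\partial$-gluing, and is not what the paper does. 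So: same core idea and same key estimates, but your handling of this one case is a genuine detour that is neither needed by (nor supplied with a convincing argument in) the paper, and I would recommend following the paper's direct local verification instead.
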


\begin{rmk}\label{shrink stein} Let $\xyz{X\ar[r]^-\pi&T}$ be a projective morphism to a Stein space given by assumption. Consider the statement of Lemma \ref{van ext}. The canonical maps $\alpha$ and $\beta$ are obtained by taking global sections of the following canonical morphisms of $\mathcal O_T$-modules:
$$
\xyz{\pi_*\mathcal O_X(K_X\otimes S\otimes F)\ar[r]^-{\rho}&
\pi_*\mathcal O_X(K_X\otimes S\otimes F)\otimes_{\mathcal O_X}\mathcal O_X/\mathcal I_{\underline{\mathcal W}}\ar[d]^-{\rho^{(1)}}\\
&\pi_*\mathcal O_X(K_X\otimes S\otimes F)\otimes_{\mathcal O_X}\mathcal O_X/\mathcal I_{\underline{\mathcal W'}}.}
$$
Proving Lemma \ref{van ext} (1), for example, amounts to showing that $\mathrm{ker}\, \rho^{(1)}(T)\subseteq \mathrm{im}\, \rho(T)$; since $T$ is Stein, this is further equivalent to show that $\mathrm{ker}\, \rho^{(1)}\subseteq \mathrm{im}\, \rho$ or, equivalently, that $\mathrm{ker}\, \rho^{(1)}(T')\subseteq \mathrm{im}\, \rho(T')$ for every relatively compact open subset $T'$ of $T$.
\end{rmk}

\begin{proof}[Proof of Lemma \ref{van ext}]
Suppose we have\footnote{Since $\underline{\mathcal W}$ is reduced and $\underline{\mathcal W}$ is an snc family, a section 
$
u\in \Gamma\big(\underline{\mathcal W},(K_X\otimes S\otimes F)|_{\underline{\mathcal W}}\big)
$
can always be represented uniquely by its ``restrictions'' (via adjunction)
$
u_W\in \Gamma(W,K_W\otimes S^W\otimes F|_W)\, (W\in\mathcal W)
$.} any $u=(u_W)_{W\in\mathcal W}\in \ker\,\beta$. 

We will apply Theorem \ref{ext snc} with a particular type of function $G$. As pointed out in \cite{gz} (in a slightly different manner), a function $G\in C^{\infty}(-A,\infty)$ lies in $\mathcal G_A$ if
\begin{itemize}
\item[-] $G^{(j)}(-A^+)$ ($j=0,1,2$) all exist and are strictly positive,
\item[-] $G'(-A^+)^2-G(-A^+)G''(-A^+)\geqslant 0$,
\item[-] $G''$ is strictly positive and nonincreasing, and
\item[-] $G'(\infty):=\lim\limits_{y\rightarrow \infty}G'(y)$ exists and is strictly positive.
\end{itemize}
For example, given a number $\varepsilon>0$ and a smooth function $H$ on $(-A,\infty)$ which is strictly positive, nonincreasing, and integrable with $H(-A^+)\in(0,\infty)$, one can construct a function $G_{\varepsilon, H}\in\mathcal G_A$ as follows. Let 
$$
G_1(y) := \frac{1}{\varepsilon} + \int_{-A^+}^y H(t) dt\ \text{ and }\ G_{\varepsilon,H}(y):=\frac{1}{\varepsilon^2H(-A^+)} + \int_{-A^+}^y G_1(t)dt.
$$
It is direct to see the four conditions above hold for $G_{\varepsilon,H}$, and hence $G_{\varepsilon,H}\in\mathcal G_A$. We have
$$
G_{\varepsilon,H}'(\infty)=\frac{1}{\varepsilon} + \int_{-A^+}^\infty H(t) dt
$$
and 
$$
\frac{G_{\varepsilon,H}'\big(-\log|\sigma_S|_{h_S}^{2\gamma_\bullet}\big)}{G_{\varepsilon,H}\big(-\log|\sigma_S|_{h_S}^{2\gamma_\bullet}\big)}
\leqslant 
\frac{G_{\varepsilon,H}'(A^+)}{G_{\varepsilon,H}(-A^+)}=\varepsilon H(-A^+).
$$
Now we examine curvature conditions (\ref{cur snc 1}), (\ref{cur snc 2}), and (\ref{cur snc'}) with $G=G_{\varepsilon,H}$. Consider the following family of conditions:
\begin{align}\label{cur snc}
\begin{split}
&\Theta_{h_F}
-
\sum_{j\in J(\mathcal W)}
\big(
\delta(t_j-1)
+\varepsilon H(-A^+)s\gamma_j
\big)
\,\Theta_{h_j}\otimes\mathrm{id}_F
\\ 
&\quad\quad\quad\quad\quad\quad\quad -
\sum_{j\notin J(\mathcal W)}
\big(
\varepsilon H(-A^+)s\gamma_j
-\delta_j
\big)
\,\Theta_{h_j}\otimes\mathrm{id}_F
\geqslant_{\mathrm{Nak}} 0
\\
&\text{ for }\ s\in [0,1]\ \text{ and }\ (t_j)\in \prod\limits_{j\in J(\mathcal W)} [0, T_j]\quad \bigg(T_j:=\sum\limits_{W\,:\,j\in J_W}|J_W|\bigg).
\end{split}
\end{align}
When $h_F$ is smooth, we see that (\ref{cur snc 1}) and (\ref{cur snc 2}) hold by taking $s=0$ and $s=1$ in (\ref{cur snc}); when $F$ is a line bundle and $h_F$ is a singular metric, (\ref{cur snc'}) is equivalent to (\ref{cur snc}). (\ref{bdd below}) holds if we further take 
$$
H(y)=y^{-2},\quad 
\varepsilon =1,\quad
\gamma_j=\left\{
\begin{array}{cc}
1,& j\in J(\mathcal W);\\
0, &j\notin J(\mathcal W),
\end{array}
\right.
\quad
\delta_j=\left\{
\begin{array}{cc}
\delta,& j\in J(\mathcal W);\\
0, & j\notin J(\mathcal W).
\end{array}
\right.
$$
It remains to show that the integrability condition (\ref{snc int finite})
$$
\int_{W} 
\frac{|\sigma^{J_W}|_{h^{J_W}}^{2\delta}}{
\prod\limits_{J'\in\mathrm{J}_0(\mathcal W)\setminus\{J_W\}}\,\Big(\sum \limits_{j\in J'}|\sigma_j|^2_{h_j}\Big)^{\delta
\,|J'|}}\left\langle \frac{u_W}{\sigma^{J_W}}\right\ra^2_{h_F}
<\infty\quad (W\in\mathcal W)
$$
holds for some $0<\delta<1$. We are allowed to replace $X$ with a suitable relatively compact open subset, as indicated in Remark \ref{shrink stein}, and hence the verification of (\ref{snc int finite}) is purely local. For a point $p\in W$ suppose (possibly after reordering the $S_j$) that 
$$
\{j\in J(\mathcal W)\setminus J_W\,|\, p\in S_j\}=\{1,\dots,k\}
$$ 
and 
$$
\{j\in \{1,\dots,n=\dim X\}\setminus J_W\, |\,p\in S_j\} = \{1,\dots,k,k+1,\dots,k'\},
$$
and that $(V_p,\{z_j\})$ is a coordinate patch with center $p$ on which $F$ and all $S_j$ are trivial and $\sigma_j$ is represented by $z_j$ for $j=1,\dots,k'$. Express $u_W$ as a holomorphic vector-valued function $(u_1,\dots,u_r)$ and $h_F$ is a measurable hermitian matrix $h_{a\overline b}$ of order $r$. It suffices to show that, for a fixed set of positive integers 
$$
m_{j_1\dots j_m}\quad (1\leqslant j_1\leqslant\cdots\leqslant j_l\leqslant k\ \text{ and }\ 1\leqslant l\leqslant k)
$$
we have
$$
\int_{V_p\cap W} 
\frac{\sum\limits_{a,b}h_{a\overline b}u_a \overline u_b}{
\prod\limits_{l=1}^k \prod\limits_{1\leqslant j_1\leqslant\cdots\leqslant j_l\leqslant k}\Big(\sum\limits_{m=1}^l |z_{j_m}|^2\Big)^{\delta
\,l\, m_{j_1\cdots j_l}}
|z_1\cdots z_k z_{k+1}\cdots z_{k'}|^2} <\infty
$$
for sufficiently small $\delta>0$. Since $u\in\ker\, \beta$, $u_1,\dots, u_r$ all vanish along the zero set of the function $z_1\cdots z_{k'}$, i.\ e., there exist holomorphic functions $v_1,\dots, v_r$ on $V$ such that 
$
u_a = z_1\cdots z_{k'}\,v_a\, (a=1,\dots,r)
$, and hence the integral becomes
$$
\int_{V_p\cap W} 
\frac{\sum\limits_{a,b}h_{a\overline b}v_a \overline v_b}{
\prod\limits_{l=1}^k \prod\limits_{1\leqslant j_1\leqslant\cdots\leqslant j_l\leqslant k}\Big(\sum\limits_{m=1}^l |z_{j_m}|^2\Big)^{\delta
\,l\, m_{j_1\cdots j_l}}}.
$$
If $h_F$ is smooth, the integral is clearly finite for sufficiently small $\delta>0$. In the case of a line bundle with a singular metric, the integral is finite for sufficiently small $\delta>0$ by the validity of the strong openness conjecture.
\end{proof}

Now we can proceed the proof of Theorem \ref{from snc}, which is a slight modification from a strategy suggested by Demailly. Suppose that $\mathcal W^{(m)}\supsetneq( \mathcal W^{(m+1)})=\emptyset$. Consider a section $u\in \Gamma\big(\underline{\mathcal W},(K_X\otimes S\otimes F)|_{\underline{\mathcal W}}\big)$
and its restrictions:
$$
\xyz{
\Gamma(X,K_X\otimes S\otimes F)\ar[r]
\ar[rd]\ar[rddd]
&
\Gamma\big(\underline{\mathcal W},(K_X\otimes S\otimes F)|_{\underline{\mathcal W}}\big)\ar[d]
&
u\ar@{|->}[d]
\\
&
\Gamma\big(\underline{\mathcal W}^{(1)},(K_X\otimes S\otimes F)|_{\underline{\mathcal W}^{(1)}}\big)\ar[d]
&
u_1\ar@{|->}[d]
\\
&\vdots \ar[d]&\vdots \ar@{|->}[d]\\
&
\Gamma\big(\underline{\mathcal W}^{(m)},(K_X\otimes S\otimes F)|_{\underline{\mathcal W}^{(m)}}\big) 
&
u_m.
}
$$
By Lemma \ref{van ext}, $u_m=U_m|_{\underline{\mathcal W}^{(m)}}$ for some $U_m\in \Gamma(X,K_X\otimes S\otimes F)$. In particular, $u_{m-1}-U_m|_{\mathcal S_{\mathcal W^{(m)}}}=0$. Suppose we have created 
$$
U_m,\dots, U_{m-k+1}\in \Gamma(X,K_X\otimes S\otimes F)
$$
such that
$$
u_{m-k}-(U_m+\cdots+U_{m-k+1})|_{\mathcal \underline{\mathcal W}^{(m-k+1)}}=0.
$$
Lemma \ref{van ext} again yields some $U_{m-k}\in \Gamma(X,K_X\otimes S\otimes F)$ such that
$$
u_{m-k}-(U_m+\cdots+U_{m-k+1})|_{\mathcal \underline{\mathcal W}^{(m-k)}}=U_{m-k}|_{\mathcal \underline{\mathcal W}^{(m-k)}}.
$$
Repeating this procedure we will reach the stage $k=m$ and obtain the desired extension. In the case of a line bundle with a singular metric, all the sections $U_j$, and hence $U$, may be chosen in $\Gamma\big(X,\mathcal I(h_F)(K_X\otimes S\otimes F)\big)$.

\appendix

\section{\bf Preliminaries for solving variants of the $\overline\partial$-equation}\label{d-bar prel}  

\begin{thm} [(6.1) Theorem in \cite{agbook}] \label{L2 est tool} Let $(X,g)$ be an $n$-dimensional K\"ahler manifold admitting a complete K\"ahler metric (which is not necessarily $g$), $(E,h)$ a holomorphic vector bundle on $X$ with a smooth hermitian metric, and $\lambda$ and $\mu$ two strictly positive {\it bounded} smooth functions. Let 
$$
R=R_{h,\lambda,\mu}:=\sqrt{-1}\Bigg(\lambda \Theta_{h} - \bigg(\partial \overline{\partial}
\lambda - \frac{1}{\mu}\partial{\lambda}\wedge\overline\partial{\lambda}\bigg)\otimes\mathrm{id}_E\Bigg).
$$
For any $\eta\in L^2_{g,h}(X, \wedge^{n,q}T^*X\otimes E)$ (with $q\geqslant 1$) and any nonnegative constant $C$, if $CR$ is semipositive, $\overline\partial \eta=0$, and
\begin{align}\label{appriori for twisted dbar}
|(\eta, v)_{L^2_{g,h}}|^2
\leqslant 
C
\int_X
R[v,v]_{g,h}\, dV_g
\end{align}
for every $v\in \mathcal D^{n,q}(X,E)$, then there exist a locally Lebesgue integrable $E$-valued $(n,q-1)$-form $\gamma$ and a locally integrable $E$-valued $(n,q)$-form $\beta$ such that 
$$
\overline\partial \gamma = \eta\ \text{ in the sense of current}
$$
and
$$  
\int_X \frac{1}{\lambda+\mu}\,|\gamma|^2_{g,h}\,dV_g\leqslant C.
$$
\end{thm}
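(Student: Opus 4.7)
The plan is to prove this classical twisted $L^{2}$-estimate of Andreotti--Grauert (of H\"ormander--Skoda--Demailly type) by two standard ingredients: a twisted Bochner--Kodaira--Nakano a priori inequality, combined with a Riesz representation in a Hilbert space equipped with a suitable weighted inner product. I begin with a routine reduction: replace $g$ by $g_\varepsilon := g+\varepsilon g_0$, where $g_0$ is the given complete K\"ahler metric on $X$; then $g_\varepsilon$ is complete, so by Demailly's density lemma $\mathcal D^{n,q}(X,E)$ is dense in $\mathrm{Dom}(\overline\partial)\cap\mathrm{Dom}(\overline\partial^{*})$ for the corresponding graph norm. Solve the $\overline\partial$-equation uniformly in $\varepsilon$ and extract a weak $L^{2}_{\mathrm{loc}}$-limit as $\varepsilon\to 0^{+}$, using weak lower semicontinuity of the weighted $L^{2}$-norm to preserve the bound.

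Next I would establish the twisted Bochner--Kodaira--Nakano inequality: for every $v\in\mathcal D^{n,q}(X,E)$,
$$
T(v) \;:=\; \bigl\|(\lambda+\mu)^{1/2}\,\overline\partial^{*}v\bigr\|^{2}_{g,h}+\bigl\|\lambda^{1/2}\,\overline\partial v\bigr\|^{2}_{g,h}\;\geqslant\;\int_{X} R[v,v]_{g,h}\,dV_g.
$$
This is obtained by multiplying the usual BKN identity by the bounded positive function $\lambda$, integrating by parts to produce a $-\sqrt{-1}\,\partial\overline\partial\lambda$ term together with a cross term coupling $\partial\lambda$ to $\overline\partial^{*}v$, and absorbing the cross term into $\|(\lambda+\mu)^{1/2}\overline\partial^{*}v\|^{2}$ via Cauchy--Schwarz with absorption parameter $\mu$; the Cauchy--Schwarz penalty is precisely the term $\sqrt{-1}\,\mu^{-1}\,\partial\lambda\wedge\overline\partial\lambda$ inside $R$, and the extra $\mu$ on the left is the other half of the completed square.

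Combining the two bounds yields $|(\eta,v)|^{2}\leqslant C\cdot T(v)$ for all $v\in\mathcal D^{n,q}$. Completing $\mathcal D^{n,q}$ with respect to $T^{1/2}$ produces a Hilbert space $H$ on which the linear form $v\mapsto (\eta,v)_{L^{2}_{g,h}}$ is bounded with norm $\leqslant\sqrt C$, so Riesz representation gives $w^{*}\in H$ with $T(w^{*})\leqslant C$ and $(\eta,v) = T(w^{*},v)$ for every $v\in\mathcal D^{n,q}$. Setting $\gamma := (\lambda+\mu)\,\overline\partial^{*}w^{*}$, the Riesz identity becomes $(\gamma,\overline\partial^{*}v)+(\lambda\overline\partial w^{*},\overline\partial v)=(\eta,v)$ for every $v$, i.e.\ $\overline\partial\gamma+\overline\partial^{*}(\lambda\overline\partial w^{*})=\eta$ distributionally. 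Applying $\overline\partial$ and using $\overline\partial\eta=0$ together with $\overline\partial^{2}=0$, we get $\overline\partial\overline\partial^{*}(\lambda\overline\partial w^{*})=0$, whence $\|\overline\partial^{*}(\lambda\overline\partial w^{*})\|^{2}=(\lambda\overline\partial w^{*},\overline\partial\overline\partial^{*}(\lambda\overline\partial w^{*}))=0$, so $\overline\partial\gamma=\eta$. The norm bound is automatic: $\int_X|\gamma|^{2}_{g,h}/(\lambda+\mu)\,dV_g = \|(\lambda+\mu)^{1/2}\overline\partial^{*}w^{*}\|^{2}\leqslant T(w^{*})\leqslant C$.

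The principal obstacle is the bookkeeping in the second step, where the Cauchy--Schwarz absorption must be coordinated so that the curvature emerges in precisely the form $\lambda\Theta_{h}-\partial\overline\partial\lambda+\mu^{-1}\partial\lambda\wedge\overline\partial\lambda$ as in the definition of $R$. A secondary subtle point is justifying the implication $\overline\partial\overline\partial^{*}(\lambda\overline\partial w^{*})=0\Rightarrow\overline\partial^{*}(\lambda\overline\partial w^{*})=0$, which requires the relevant form to lie in $\mathrm{Dom}(\overline\partial)$ and ultimately relies on the completeness of $g_{0}$ from the first step.
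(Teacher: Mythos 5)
The paper does not contain a proof of this statement: the bracketed annotation ``(6.1) Theorem in [agbook]'' shows that it is quoted verbatim as a known result from Demailly's online book, and nothing in Sections 1--4 or the Appendix re-derives it. So there is no ``paper's proof'' to compare against; what you have written is a sketch of the argument as it appears in the cited source, and in that sense your route --- approximating by the complete metric $g_{\varepsilon}=g+\varepsilon g_{0}$, proving a twisted Bochner--Kodaira--Nakano a priori inequality, and solving by a functional-analytic representation --- is exactly the standard one.

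A few remarks worth making about your sketch nonetheless. First, the way you set up the Riesz step forces you to confront the a posteriori vanishing of $\overline\partial^{*}(\lambda\overline\partial w^{*})$, which you correctly flag as delicate; the more common formulation (and the one in Demailly's text) sidesteps this by first restricting the a priori estimate to $v\in\ker\overline\partial$, where the $\|\lambda^{1/2}\overline\partial v\|^{2}$ term drops out, and then extending the functional $v\mapsto(\eta,v)$ by Hahn--Banach from the subspace $\{\overline\partial^{*}v : v\in\ker\overline\partial\cap\mathrm{Dom}\,\overline\partial^{*}\}$; this produces $\gamma$ directly with no extra term to kill. Second, your reduction to $g_{\varepsilon}$ requires observing that, for $(n,q)$-forms with $q\geqslant 1$, enlarging the K\"ahler metric can only improve the a priori inequality, so the hypothesis (\ref{appriori for twisted dbar}) propagates to $g_{\varepsilon}$; this is worth stating since (\ref{appriori for twisted dbar}) is assumed only for $g$. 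Third, purely as a reading note on the statement as transcribed in this paper: the form $\beta$ announced in the conclusion never reappears and plays no role, and the sign in front of $\frac{1}{\mu}\partial\lambda\wedge\overline\partial\lambda$ in the displayed $R$ does not match the expression $\lambda\Theta_{\widetilde h}-\partial\overline\partial\lambda-\frac{1}{\mu}\partial\lambda\wedge\overline\partial\lambda$ that is actually estimated and fed into the theorem in Section~\ref{general scheme}; both look like transcription slips rather than genuine differences from the source, but you should be aware that your Cauchy--Schwarz bookkeeping should reproduce the version with the minus sign, i.e.\ the one used in the body of the paper.
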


%The following lemma is crucial in obtaining estimates in $L^2$-norms with weight functions singular along analytic subsets from which holomorphic sections are to be extended. In our discussions it will mainly be applied to fulfil the completeness requirement in Lemma \ref{L2 est tool}. 
%\begin{lem}[]\label{dema82} Given an analytic set $Z$ and a relatively compact open set $\Omega$ in a K\"ahler manifold $X$, if $\Omega$ admits a complete K\"ahler metric, so does $\Omega\setminus Z$.\end{lem}

We need the following slight modification of Lemma 4.6 of \cite{gz}:  
\begin{lem} []\label{loc L2 conv => loc unif conv} Let $Z$ be an analytic subset of a hermitian manifold $(M,g)$. Suppose that there are 
\begin{itemize}
\item[(1)] open subsets  
$
O_1\subseteq O_2\subseteq\cdots\subseteq  O_m
$
of $M$ such that 
$M\setminus Z=\bigcup\limits_m O_m$,
\item[(2)] a sequence $U_m$ of measurable sections of $K_M\otimes E$, $E$ being a hermitian holomorphic vector bundle equipped with a smooth hermitian metric $h$, for every point $p\in M\setminus (Z)_{\mathrm{sing}}$ there exists an open neighborhood $V_p$ of $p$ and an index $m_p$ such that $U_m|_{V_p}\, (m\geqslant m_p)$ are all holomorphic, and 
\item[(3)] a sequence $w_m$ of positive Lebesgue measurable functions  on $M$ such that for every compact subset $K$ of $M\setminus Z$ there exists an index $m_K$ such that the family of functions $w_m\, (m\geqslant m_K)$ are uniformly bounded away both from $0$ and from $\infty$, and
\end{itemize}
If $w_m$ converges to a function $w$ almost everywhere, and if
$
\liminf\limits_{m\rightarrow\infty} \int_{O_m}
 w_m\langle U_m\ra^2_h
$
exists as a real number, then $U_m$ admits a subsequence which converges uniformly on every compact subset of $M$ to a section $U\in\Gamma\big(M,\mathcal O_M(K_M\otimes E)\big)$  such that
$$
\int_M w\langle U\ra^2_h
\leqslant 
\liminf\limits_{m\rightarrow\infty} \int_{O_m}
w_m\langle U_m\ra^2_h.
$$
\end{lem}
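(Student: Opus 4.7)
The plan is to follow the standard four-step recipe: (a) obtain uniform $L^2$-bounds, hence $L^\infty$-bounds, on compact subsets of $M\setminus Z$; (b) propagate these bounds across the smooth part of $Z$ using the hypothesis that each $U_m$ is holomorphic on a full neighborhood of every point of $M\setminus (Z)_{\mathrm{sing}}$; (c) extract a convergent subsequence by Montel and extend the limit across $(Z)_{\mathrm{sing}}$; and (d) conclude by Fatou.

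For step (a), fix a compact $K\subseteq M\setminus Z$. Since $\{O_m\}$ exhausts $M\setminus Z$ and $w_m\geqslant c_K>0$ on $K$ for $m$ large by hypothesis (3), passing to a subsequence realizing the liminf yields $\int_K\langle U_m\rangle_h^2\leqslant c_K^{-1}\int_{O_m}w_m\langle U_m\rangle_h^2\leqslant C$ uniformly. Since $U_m$ is holomorphic on a neighborhood of $K$ (by hypothesis (2), as $K\subseteq M\setminus(Z)_{\mathrm{sing}}$), applying the submean value inequality componentwise in a local trivialization---together with the smoothness of $h$---converts this $L^2$-bound into a uniform $L^\infty$-bound on any slightly smaller compact.

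For step (b), the delicate case is a point $p\in Z\setminus (Z)_{\mathrm{sing}}$: here $w_m$ may degenerate along $Z$, so the reasoning of (a) does not directly reach $p$. However, hypothesis (2) provides a neighborhood $V_p$ on which $U_m$ is genuinely holomorphic; choosing local coordinates $(z_1,\dots,z_n)$ at $p$ with $Z\cap V_p=\{z_1=0\}$, the one-variable maximum principle in the $z_1$-direction yields
\[
\sup_{|z_1|\leqslant\varepsilon,\,(z_2,\dots,z_n)\in K''}|U_m|\leqslant\sup_{|z_1|=\varepsilon,\,(z_2,\dots,z_n)\in K''}|U_m|,
\]
and the right-hand side is bounded by the estimate from (a) because $\{|z_1|=\varepsilon\}\times K''$ sits compactly in $V_p\setminus Z$. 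Thus the $U_m$ are uniformly bounded on every compact subset of $M\setminus (Z)_{\mathrm{sing}}$. This is the main obstacle of the argument: the essential use of hypothesis (2) is to supply holomorphicity on full neighborhoods of smooth points of $Z$, which allows the transverse Cauchy integral to bypass the possible degeneration of $w_m$ along $Z$.

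For steps (c) and (d), Montel's theorem plus a diagonal argument over a countable compact exhaustion of $M\setminus (Z)_{\mathrm{sing}}$ extracts a subsequence converging uniformly on those compacts to a holomorphic section of $K_M\otimes E$ over $M\setminus(Z)_{\mathrm{sing}}$. Since $Z$ is a proper analytic subset, $(Z)_{\mathrm{sing}}$ has complex codimension at least two in $M$; combining the uniform local bounds just obtained with Riemann's second extension theorem extends the limit (and, after modification on the measure-zero set $(Z)_{\mathrm{sing}}$, each $U_m$) to a holomorphic section on all of $M$, giving uniform convergence on compacts of $M$ to some $U\in\Gamma(M,\mathcal O_M(K_M\otimes E))$. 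Finally, since $w_m\langle U_m\rangle_h^2\chi_{O_m}\to w\langle U\rangle_h^2$ almost everywhere on $M$ (as $w_m\to w$ a.e., the $U_m$ converge uniformly on compacts, and $Z$ has measure zero), Fatou's lemma delivers $\int_M w\langle U\rangle_h^2\leqslant\liminf_m\int_{O_m}w_m\langle U_m\rangle_h^2$.
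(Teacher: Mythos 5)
Your proof is correct and follows the same overall structure as the paper's: pass to a subsequence realizing the $\liminf$, reduce to the case that $Z$ is smooth by working on $M\setminus (Z)_{\mathrm{sing}}$ and recovering the full statement via Riemann extension at the end, establish a local normal-family bound at every point by a transverse one-variable argument that propagates control across $Z$, extract a convergent subsequence by a diagonal argument, and conclude with Fatou. The one place the routes differ is the mechanism used to push the bound across a smooth point of $Z$. The paper invokes an $L^2$ Cauchy-type inequality (Lemma 4.4 of Guan--Zhou), namely $\int_{D_1(0)^n}|F|^2\,d\lambda\leqslant\tfrac{1}{1-r^2}\int_{(D_1(0)\setminus D_r(0))\times D_1(0)^{n-1}}|F|^2\,d\lambda$, which bounds the unweighted $L^2$-norm on a polydisc meeting $Z$ directly by the $L^2$-norm on an annular region disjoint from $Z$ where the weights $w_m$ are under control; normality then follows at once. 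You instead first upgrade the $L^2$-control on compacts of $M\setminus Z$ to an $L^\infty$-bound by the submean value inequality and then transport that $L^\infty$-bound inward by the one-variable maximum principle. Both mechanisms stem from subharmonicity of $|F|^2$ in the transverse variable and give the same conclusion; the paper's variant is a little more economical in that it never leaves $L^2$. Two cosmetic remarks on your write-up: (i) when $Z$ has local codimension greater than one you can only arrange $Z\cap V_p\subseteq\{z_1=0\}$ rather than equality, which is all your argument actually uses; (ii) the parenthetical claim that \emph{each} $U_m$ extends across $(Z)_{\mathrm{sing}}$ is not justified (for fixed $m$, the index $m_p$ may exceed $m$ near $(Z)_{\mathrm{sing}}$), but this remark is superfluous -- the Fatou step needs only a.e.\ convergence of the integrands, which your argument does provide.
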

\begin{proof} We may assume that $\int_{O_m}w_m\langle U_m\ra^2_h$ actually converges as $m\rightarrow\infty$ by passing to subsequences, and that $Z$ is a submanifold by considering $(M\setminus Z_{\mathrm{sing}},Z\setminus Z_{\mathrm{sing}})$ instead of $(M,Z)$ and then extending the obtained section on $M\setminus Z_{\mathrm{sing}}$ to $M$ by Riemann's extension theorem. Besides, it suffices to show, as we will do in the next paragraph, that every point $p\in M$ admits an open neighborhood $N_p$ on which $U_m$ with $m$ sufficiently large form a normal family. 
More precisely, were this done, then $M$ is covered by countably many such open sets $N_{p_1}, N_{p_2},\dots$, and an application of the diagonal method yields a subsequence $U_{m_k}$ which converges uniformly on every compact subset of $M$ to a limit $U$, which is clearly a holomorphic section of $K_M\otimes E$ on $M$ by the condition (2). Then for every $j$ we have
$$
\int_{O_j} w\langle U\ra^2_h\leqslant \lim\limits_{k\rightarrow\infty}\int_{O_{m_k}} w_{m_k}\langle U_{m_k}\ra^2_h
$$
by Fatou's lemma, and hence 
$$
\int_M w\langle U\ra^2_h=\int_{M\setminus Z} w\langle U\ra^2_h=\lim\limits_{j\rightarrow\infty}\int_{O_j} w\langle U\ra^2_h
\leqslant \lim\limits_{m\rightarrow\infty}\int_{O_m} w_m\langle U_m\ra^2_h.
$$
Therefore it remains to find the desired neighborhood $N_p$ for every $p\in M$. 

We may assume that $M$ is an open subset of $\mathbf C^n$. It suffices to find a neighborhood $N_p$ for every $p\in M$ such that $U_m$ for $m$ sufficiently are uniformly bounded with respect to the euclidean $L^2$ norms on $N_p$. For $p\in M\setminus Z$ such $N_p$ exists obviously since both $w_m$ and the metric $h$ have strictly positive uniform lower and upper bounds. For $p\in Z$, we may further assume that 
$$
(M, Z,p)=\big(D_2(0)^n, \{0\}\times D_2(0)^{n-1}, (0,\dots,0)\big)
$$
where $D_R(0):=\{z\in\mathbf C\,|\, |z|<R\}\big)$, and that $E$ is the trivial bundle. Having no control on $w_m$, we need to apply the following elementary fact (cf.\ Lemma 4.4 of \cite{gz}): for fixed $0<r<1$ and for every holomorphic function $F$ on $D_2(0)^n$, we have
$$
\int_{D_1(0)^n} |U|^2\, d\lambda
\leqslant 
\frac{1}{1-r^2} \int_{\big(D_1(0)\setminus D_r(0)\big)\times D_1(0)^{n-1}} |U|^2\, d\lambda.
$$ 
%This inequality can be directly verified by expanding $F$ as a power series centered at $(0,\dots,0)$ and using the fact that different monomial functions are mutually orthogonal with respect to the $L^2$-norms on both sides.
For $m$ sufficiently large $\big(D_1(0)\setminus D_r(0)\big)\times D_1(0)^{n-1}$ is covered by $O_m$, and hence 
$$
\int_{\big(D_1(0)\setminus D_r(0)\big)\times D_1(0)^{n-1}} |U|^2\, d\lambda
$$ 
is bounded by a fixed multiple of $\int_{O_m}w_m\langle U_m\ra_h$. This completes the proof.
\end{proof}

\end{document}